\providecommand{\U}[1]{\protect\rule{.1in}{.1in}}
\newtheorem{theorem}{Theorem}
\newtheorem{lemma}[theorem]{Lemma}
\newtheorem{proposition}[theorem]{Proposition}
\newenvironment{proof}[1][Proof]{\noindent\textbf{#1.} }{\ \rule{0.5em}{0.5em}}
\begin{document}

\title{Optimal Robustness Results for Some Bayesian Procedures and the Relationship
to Prior-Data Conflict}
\author{Luai Al-Labadi and Michael Evans\\Department of Statistics\\University of Toronto}
\date{}
\maketitle

\begin{abstract}
The robustness to the prior of Bayesian inference procedures based on a
measure of statistical evidence are considered. These inferences are shown to
have optimal properties with respect to robustness. Furthermore, a connection
between robustness and prior-data conflict is established. In particular, the
inferences are shown to be effectively robust when the choice of prior does
not lead to prior-data conflict. When there is prior-data conflict, however,
robustness may fail to hold.

\end{abstract}

\section{Introduction}

Robustness to the choice of the prior is an issue of considerable importance
in a Bayesian statistical analysis. If an inference is very sensitive to the
choice of the prior, then this could be viewed as either a negative for the
inference method being used or for the choice of prior. In this paper it is
shown that certain inferences are in a sense optimally robust to the choice of
the prior. Furthermore, when the sensitivity of the inferences to the prior is
measured quantitatively, it is shown that there is an intimate connection
between the effective robustness of the inferences and whether or not there is
prior-data conflict. So by choice of the inferential methodology and the
avoidance of prior-data conflict, robustness of the inferences to the choice
of prior is achieved.

The basic ingredients for a statistical analysis are taken here to be the data
$x,$ a statistical model $\{f_{\theta}:\theta\in\Theta\},$ where each
$f_{\theta}$ is a probability density with respect to volume measure $\mu$ on
the sample space $\mathcal{X},$ and a proper prior density $\pi$ with respect
to volume measure $\nu$ on $\Theta.$ Note that volume measure on a discrete
set is taken to be counting measure. Furthermore, suppose that interest is in
making inferences about the quantity $\psi=\Psi(\theta)$ where $\Psi
:\Theta\rightarrow\Psi$ is onto and we don't distinguish between the function
and its range to save notation.

Let $\pi_{\Psi}(\cdot\,|\,x)$ and $\pi_{\Psi}$ denote the posterior and prior
densities of $\psi$ where these are both taken with respect to support measure
$\nu_{\Psi}$ on $\Psi.$ It follows that, under smoothness assumptions,
$\pi_{\Psi}(\psi)=\int_{\Psi^{-1}\{\psi\}}\pi(\theta)J_{\Psi}(\theta
)\,\nu_{\Psi^{-1}\{\psi\}}(d\theta)$ where $J_{\Psi}(\theta)=(\det
(d\Psi(\theta)\circ(d\Psi(\theta))^{t})^{-1/2},d\Psi$ is the differential of
$\Psi$ and $\nu_{\Psi^{-1}\{\psi\}}$ is volume measure on $\Psi^{-1}\{\psi\}.$
Also, $\pi_{\Psi}(\psi\,|\,x)=\int_{\Psi^{-1}\{\psi\}}\pi(\theta
\,|\,x)J_{\Psi}(\theta)\,\nu_{\Psi^{-1}\{\psi\}}(d\theta)$ where $\pi
(\theta\,|\,x)=\pi(\theta)f_{\theta}(x)/m(x),$ with $m(x)=\int_{\Theta}%
\pi(\theta)f_{\theta}(x)\,\nu(d\theta),$ is the posterior density of $\theta$
with respect to $\nu.$ Note that $m$ is the prior predictive density of the
data with respect to $\mu.$ The conditional prior of $\theta$ given $\psi
=\Psi(\theta)$ has density $\pi(\theta\,|\,\psi)=\pi(\theta)J_{\Psi}%
(\theta)/\pi_{\Psi}(\psi)$ with respect to $\nu_{\Psi^{-1}\{\psi\}}$ on the
set $\Psi^{-1}\{\psi\}.$ The conditional prior predictive density of $x$ is
then given by $m(x\,|\,\psi)=\int_{\Psi^{-1}\{\psi\}}\pi(\theta\,|\,\psi
)f_{\theta}(x)\,\nu_{\Psi^{-1}\{\psi\}}(d\theta).$ A simple argument, see
Baskurt and Evans (2013), gives the Savage-Dickey ratio result that
\begin{equation}
\frac{\pi_{\Psi}(\psi\,|\,x)}{\pi_{\Psi}(\psi)}=\frac{m(x\,|\,\psi)}{m(x)},
\label{savagedickey}%
\end{equation}
which has some use in the developments here.

Robustness to the prior has been considered by many authors and there are a
number of different approaches. Many discussions are concerned with
determining the range of values that some characteristic of interest takes
when the prior is allowed to vary over some class. Berger (1990, 1994) contain
broad reviews of work on this topic and Rios Insua and Ruggeri (2000) is a
collection of papers by key contributors. Dey and Birmiwal (1994) considers
global robustness measures based upon measures of distance from the posterior distribution.

The approach taken here is to study robustness to the prior for relative
belief inferences for $\psi$ rather than all possible inferences. Relative
belief inferences are based on the relative belief ratio defined
\begin{equation}
RB_{\Psi}(\psi\,|\,x)=\lim_{\delta\rightarrow0}\frac{\Pi_{\Psi}\left(
N_{\delta}(\psi)\,|\,x\right)  }{\Pi_{\Psi}\left(  N_{\delta}(\psi)\right)  }
\label{rbdef}%
\end{equation}
whenever this limit exists for a sequence of neighborhoods $N_{\delta}(\psi)$
of $\psi$ converging nicely to $\psi$ (see Rudin (1974) for the definition of
'converging nicely'). Under mild regularity conditions the limit exists and is
given by $RB_{\Psi}(\psi\,|\,x)=\pi_{\Psi}(\psi\,|\,x)/\pi_{\Psi}(\psi).$
Since $RB_{\Psi}(\psi\,|\,x)$ measures the change in belief that $\psi$ is the
true value it is a measure of evidence. Here $RB_{\Psi}(\psi\,|\,x)>1$ means
that there is evidence in favor of $\psi$ being the true value, as belief in
$\psi$ has increased after seeing the data, and $RB_{\Psi}(\psi\,|\,x)<1$
means that there is evidence against $\psi$ being the true value, as belief in
$\psi$ has decreased after seeing the data. Section 2 provides some more
details concerning relative belief inferences for both estimation and
hypothesis assessment but also see Baskurt and Evans (2013). Results in
Section 3 establish that these inferences have optimal robustness properties
when the marginal prior for $\psi$ is allowed to vary over all possibilities
in the class of $\epsilon$-contaminated priors. This generalizes results found
in Wasserman (1989), Ruggeri and Wasserman (1993) and de la Horra and
Fernandez (1994). Furthermore, an ambiguity concerning the interpretation of
the results is resolved. As such this provides further justifications for
these inferences.

While inferences may be optimally robust, this does not imply that they are in
fact robust. In Section 4 quantitative measures of the sensitivity of relative
belief inferences to both the marginal prior of $\psi$ and the conditional
prior for $\theta$ given $\Psi(\theta)=\psi$ are derived. In Section 5 it is
shown that these inferences are indeed robust when the base prior $\pi$ does
not suffer from prior-data conflict. This adds weight to arguments concerning
the importance of checking for prior-data conflict before reporting
inferences, as prior-data conflict can imply sensitivity of the inferences to
the choice of the prior. Prior-data conflict is interpreted as the true value
lying in the tails of the prior and consistent methods have been developed for
assessing this in Evans and Moshonov (2006) and Evans and Jang (2011a).
Methodology for modifying a prior when prior-data conflict is encountered,
through the selection of a prior weakly informative with respect to the base
prior, is developed in Evans and Jang (2011b).

\section{Relative Belief Inferences}

When $RB_{\Psi}(\psi\,|\,x)>1$ this is the factor by which prior belief in the
truth of $\psi$ has increased after seeing the data. Clearly the bigger
$RB_{\Psi}(\psi\,|\,x)$ the more evidence there is in favor of $\psi$ while,
when $RB_{\Psi}(\psi\,|\,x)<1,$ the smaller $RB_{\Psi}(\psi\,|\,x)$ is the
more evidence there is against $\psi.$ This leads to a total preference
ordering on $\Psi,$ namely, $\psi_{1}$ is not preferred to $\psi_{2}$ whenever
$RB_{\Psi}(\psi_{1}\,|\,x)\leq RB_{\Psi}(\psi_{2}\,|\,x)$ since there is at
least as much evidence for $\psi_{2}$ as there is for $\psi_{1}.$ This in turn
leads to unambiguous solutions to inference problems.

The best estimate of $\psi$ is the value for which the evidence is greatest,
namely,%
\[
\psi(x)=\arg\sup RB_{\Psi}(\psi\,|\,x).
\]
Associated with this estimate is a $\gamma$-relative belief credible region
$C_{\Psi,\gamma}(x)=\{\psi:RB_{\Psi}(\psi\,|\,x)\geq c_{\Psi,\gamma}(x)\}$
where $c_{\Psi,\gamma}(x)=\inf\{k:\Pi_{\Psi}(RB_{\Psi}(\psi\,|\,x)\leq
k\,|\,x)\geq1-\gamma\}.$ Notice that $\psi(x)\in C_{\Psi,\gamma}(x)$ for every
$\gamma\in\lbrack0,1]$ and so, for selected $\gamma,$ the size of
$C_{\Psi,\gamma}(x)$ can be taken as a measure of the accuracy of the estimate
$\psi(x).$ The interpretation of $RB_{\Psi}(\psi\,|\,x)$ as the evidence for
$\psi,$ forces the use of the sets $C_{\Psi,\gamma}(x)$ for our credible
regions. For if $\psi_{1}$ is in such a region and $RB_{\Psi}(\psi
_{2}\,|\,x)\geq RB_{\Psi}(\psi_{1}\,|\,x),$ then $\psi_{2}$ must be in the
region as well as there is at least as much evidence for $\psi_{2}$ as for
$\psi_{1}.$ Optimal properties for relative belief credible regions, in the
class of all credible regions, have been established in Evans, Guttman and
Swartz (2006) and Evans and Shakhatreh (2008) and optimal properties for
$\psi(x)$ are established in Evans and Jang (2011c).

For the assessment of the hypothesis $H_{0}:\Psi(\theta)=\psi_{0},$ the
evidence is given by $RB_{\Psi}(\psi_{0}\,|\,x).$ One problem that both the
relative belief ratio and the Bayes factor share as measures of evidence, is
that it is not clear how they should be calibrated. Certainly the bigger
$RB_{\Psi}(\psi_{0}\,|\,x)$ is than 1, the more evidence we have in favor of
$\psi_{0}$ while the smaller $RB_{\Psi}(\psi_{0}\,|\,x)$ is than 1, the more
evidence we have against $\psi_{0}.$ But what exactly does a value of
$RB_{\Psi}(\psi_{0}\,|\,x)=20$ mean? It would appear to be strong evidence in
favor of $\psi_{0}$ because beliefs have increased by a factor of 20 after
seeing the data. But what if other values of $\psi$ had even larger increases?
For example, the discussion in Baskurt and Evans (2013) of the
Jeffreys-Lindley paradox makes it clear that the value of a relative belief
ratio or a Bayes factor cannot always be interpreted as an indication of the
strength of the evidence.

The value $RB_{\Psi}(\psi_{0}\,|\,x)$ can be calibrated by comparing it to the
other possible values $RB_{\Psi}(\cdot\,|\,x)$ through its posterior
distribution. For example, one possible measure of the strength is
\begin{equation}
\Pi_{\Psi}(RB_{\Psi}(\psi\,|\,x)\leq RB_{\Psi}(\psi_{0}\,|\,x)\,|\,x)
\label{strength}%
\end{equation}
which is the posterior probability that the true value of $\psi$ has a
relative belief ratio no greater than that of the hypothesized value $\psi
_{0}.$ While (\ref{strength}) may look like a p-value, it has a very different
interpretation. For when $RB_{\Psi}(\psi_{0}\,|\,x)<1,$ so there is evidence
against $\psi_{0},$ then a small value for (\ref{strength}) indicates a large
posterior probability that the true value has a relative belief ratio greater
than $RB_{\Psi}(\psi_{0}\,|\,x)$ and so there is strong evidence against
$\psi_{0}.$ If $RB_{\Psi}(\psi_{0}\,|\,x)>1,$ so there is evidence in favor of
$\psi_{0},$ then a large value for (\ref{strength}) indicates a small
posterior probability that the true value has a relative belief ratio greater
than $RB_{\Psi}(\psi_{0}\,|\,x)$ and so there is strong evidence in favor of
$\psi_{0}.$ Notice that, in the set $\{\psi:RB_{\Psi}(\psi\,|\,x)\leq
RB_{\Psi}(\psi_{0}\,|\,x)\},$ the \textquotedblleft best\textquotedblright%
\ estimate of the true value is given by $\psi_{0}$ simply because the
evidence for this value is the largest in this set.

Various results have been established in Baskurt and Evans (2103) supporting
both $RB_{\Psi}(\psi_{0}\,|\,x)$, as the measure of the evidence for $H_{0}$,
and (\ref{strength}), as a measure of the strength of that evidence. For
example, the following simple inequalities are useful in assessing the
strength of the evidence, namely, $\Pi_{\Psi}(RB_{\Psi}(\psi\,|\,x)=RB_{\Psi
}(\psi_{0}\,|\,x)\,|\,x)\leq\Pi_{\Psi}(RB_{\Psi}(\psi\,|\,x)\leq RB_{\Psi
}(\psi_{0}\,|\,x)\,|\,x)\leq$\newline$RB_{\Psi}(\psi_{0}\,|\,x).$ So if
$RB_{\Psi}(\psi_{0}\,|\,x)>1$ and $\Pi_{\Psi}(\{RB_{\Psi}(\psi_{0}%
\,|\,x)\}\,|\,x)$ is large, there is strong evidence in favor of $\psi_{0}$
while, if $RB_{\Psi}(\psi_{0}\,|\,x)<1$ is very small, then there is
immediately strong evidence against $\psi_{0}.$ Also, in situations where
there are only a few possible values of $\psi,$ then $\Pi_{\Psi}(RB_{\Psi
}(\psi\,|\,x)=RB_{\Psi}(\psi_{0}\,|\,x)\,|\,x)$ can be a more appropriate
measure of strength.

When interest is in making inferences about $\psi=\Psi(\theta),$ it is
reasonable to ask how sensitive the relief belief approach is to the
ingredients given by the prior. This entails examining how dependent
$\psi(x),C_{\Psi,\gamma}(x),RB_{\Psi}(\psi_{0}\,|\,x)$ and $\Pi_{\Psi
}(RB_{\Psi}(\psi\,|\,x)\leq RB_{\Psi}(\psi_{0}\,|\,x)\,|\,x)$ are to changes
in the prior, as these four objects represent the essential relative belief inferences.

The full prior $\pi$ for $\theta$ can always be factored as $\pi(\theta
)=\pi_{\Psi}(\psi)\pi(\theta\,|\,\psi).$ In contrast to other discussions of
robustness with respect to the prior, the sensitivity of the inferences to
$\pi_{\Psi}$ and the sensitivity of the inferences to $\pi(\cdot\,|\,\psi)$
are considered separately, as this leads to more information concerning where
the lack of robustness arises when this occurs.

\section{Optimal Robustness With Respect to the Marginal Prior}

The result (\ref{savagedickey}) implies that $RB_{\Psi}(\psi
\,|\,x)=m(x\,|\,\psi)/m(x).$ From this it is immediate that $\psi(x)=\arg
\sup_{\psi}RB_{\Psi}(\psi\,|\,x)=\arg\sup_{\psi}m(x\,|\,\psi)$ and so the
relative belief estimate is optimally robust to $\pi_{\Psi}$ as the estimate
has no dependence on the marginal prior. Furthermore, $C_{\Psi,\gamma}(x)$ is
of the form $\{\psi:m(x\,|\,\psi)\geq k\}$ for some $k$ and so the form of
relative belief regions for $\psi$ is optimally robust to $\pi_{\Psi}.$ The
specific region chosen for the assessment of the accuracy of $\psi(x)$ depends
on the posterior and so is not independent of $\pi_{\Psi}.$ It is now proved
that $C_{\Psi,\gamma}(x)$ has an optimal robustness property among all
credible regions for $\psi.$

Consider $\epsilon$-contaminated priors for\ $\theta$ of the form
\begin{equation}
\Pi_{\epsilon}=\Pi(\cdot\,|\,\psi)\times\lbrack(1-\epsilon)\Pi_{\Psi}+\epsilon
Q], \label{prior1}%
\end{equation}
where $Q$ is a probability measure on $\Psi$ and $\Pi$ is the base prior as
described in the Introduction. Note that the conditional prior of $\theta$
given $\Psi(\theta)=\psi$ is fixed and independent of $\epsilon.$

To assess the robustness of the posterior content of a set $A\subset\Psi$ it
makes sense to look at $\delta(A)=\Pi_{\Psi}^{upper}\left(  A\,|\,x\right)
-\Pi_{\Psi}^{lower}\left(  A\,|\,x\right)  $ where $\Pi_{\Psi}^{upper}\left(
A\,|\,x\right)  =\sup_{Q}\Pi_{\Psi}^{\epsilon}\left(  A\,|\,x\right)  $ and
$\Pi_{\Psi}^{lower}\left(  A\,|\,x\right)  =\inf_{Q}\Pi_{\Psi}^{\epsilon
}\left(  A\,|\,x\right)  $ and the supremum/infimum is taken over all
probability measures on $\Psi.$ For this let $\epsilon^{\ast}=\epsilon
/(1-\epsilon)$ and $r(A)=\sup_{\psi\in A}RB_{\Psi}(\psi\,|\,x)=\sup_{\psi\in
A}m(x\,|\,\psi)/m(x),$ so $r(\Psi)=RB_{\Psi}(\psi_{LRSE}(x)\,|\,x)$ and always
one and only one of $r(A),r(A^{c})$ equals $r(\Psi).$

The following result is needed and a proof is provided in the Appendix.

\begin{lemma}
\label{huber1}(Huber (1973)) Let $Q$ denote a probability measure on $\Psi.$
For prior measure $\Pi_{\Psi}^{\epsilon}=(1-\epsilon)\Pi_{\Psi}+\epsilon Q$ on
$\Psi$ and $A\subset\Psi,$ (i) $\Pi_{\Psi}^{upper}\left(  A\,|\,x\right)
=(\Pi_{\Psi}\left(  A\,|\,x\right)  +\epsilon^{\ast}r(A))/(1+\epsilon^{\ast
}r(A)),$ (ii) $\Pi_{\Psi}^{lower}\left(  A\,|\,x\right)  =\Pi_{\Psi}\left(
A\,|\,x\right)  /(1+\epsilon^{\ast}r(A^{c})),$ (iii)%
\[
\delta(A)=\frac{\Pi_{\Psi}\left(  A\,|\,x\right)  \epsilon^{\ast}%
(r(A^{c})-r(A))}{(1+\epsilon^{\ast}r(A))(1+\epsilon^{\ast}r(A^{c}))}%
+\frac{\epsilon^{\ast}r(A)}{(1+\epsilon^{\ast}r(A))}%
\]
and (iv) $\delta(A^{c})=\delta(A).$
\end{lemma}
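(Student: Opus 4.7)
The strategy is to rewrite the posterior probability of $A$ under the contaminated prior as a simple rational function of the two scalars $q(A)=\int_{A}RB_{\Psi}(\psi\,|\,x)\,dQ(\psi)$ and $q(A^{c})$, and then to optimize over their attainable ranges by monotonicity.

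First, by Bayes' rule applied to the mixture prior $\Pi_{\Psi}^{\epsilon}$, the Savage-Dickey identity (\ref{savagedickey}), and dividing numerator and denominator by $(1-\epsilon)m(x)$, one obtains
\[
\Pi_{\Psi}^{\epsilon}(A\,|\,x)=\frac{\Pi_{\Psi}(A\,|\,x)+\epsilon^{\ast}q(A)}{1+\epsilon^{\ast}q(A)+\epsilon^{\ast}q(A^{c})}.
\]
As $Q$ ranges over all probability measures on $\Psi$, the pair $(q(A),q(A^{c}))$ takes values in $[0,r(A)]\times[0,r(A^{c})]$, the extremes $r(A)$ and $r(A^{c})$ being approached by point masses concentrated at near-maximizers of $RB_{\Psi}(\cdot\,|\,x)$ on $A$ and $A^{c}$ respectively.

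Setting $s=q(A)$, $t=q(A^{c})$ and differentiating the displayed expression, I would verify that it is non-decreasing in $s$ (using $\Pi_{\Psi}(A\,|\,x)\leq 1$) and non-increasing in $t$. Consequently the supremum over $Q$ is attained (or approached) at $(s,t)=(r(A),0)$, giving (i), and the infimum at $(s,t)=(0,r(A^{c}))$, giving (ii). For (iii) I would subtract (ii) from (i) over the common denominator $(1+\epsilon^{\ast}r(A))(1+\epsilon^{\ast}r(A^{c}))$; the numerator simplifies to $\epsilon^{\ast}\Pi_{\Psi}(A\,|\,x)(r(A^{c})-r(A))+\epsilon^{\ast}r(A)(1+\epsilon^{\ast}r(A^{c}))$, and regrouping the second term yields the stated decomposition of $\delta(A)$. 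For (iv), the cleanest route is to note that the $Q$ extremizing $\Pi_{\Psi}^{\epsilon}(A\,|\,x)$ from above simultaneously extremizes $\Pi_{\Psi}^{\epsilon}(A^{c}\,|\,x)$ from below, so that $\Pi_{\Psi}^{upper}(A\,|\,x)+\Pi_{\Psi}^{lower}(A^{c}\,|\,x)=1$ and the analogous identity with roles reversed; subtracting gives $\delta(A^{c})=\delta(A)$.

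The main obstacle is the achievability step: when $r(A)$ is only a supremum and not a maximum, the boundary value $s=r(A)$ is reached only as a limit along a sequence of point masses at near-maximizers of $RB_{\Psi}$ in $A$. One has to check that the associated posteriors converge to the formulas in (i)--(ii), which follows from the continuity of $(s,t)\mapsto(\Pi_{\Psi}(A\,|\,x)+\epsilon^{\ast}s)/(1+\epsilon^{\ast}(s+t))$ in the compact rectangle $[0,r(A)]\times[0,r(A^{c})]$. Beyond this mild limiting argument the rest is routine algebra.
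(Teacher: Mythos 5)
Your proposal is correct and follows essentially the same route as the paper: both rewrite $\Pi_{\Psi}^{\epsilon}(A\,|\,x)$ as the rational function $(\Pi_{\Psi}(A\,|\,x)+\epsilon^{\ast}q(A))/(1+\epsilon^{\ast}q(A)+\epsilon^{\ast}q(A^{c}))$, exploit its monotonicity in $q(A)$ and $q(A^{c})$, and reach the extremes via point masses at near-maximizers of $m(x\,|\,\psi)$ (the paper's $Q_{\delta}$ construction), with the same algebra for (iii) and the same complementation identity for (iv). The one imprecision is that $(q(A),q(A^{c}))$ does not range over the full rectangle $[0,r(A)]\times[0,r(A^{c})]$ --- since $Q(A)+Q(A^{c})=1$, the two coordinates cannot be large simultaneously --- but this is harmless because your argument only needs the attainable set to be contained in the rectangle and the two optimizing corners $(r(A),0)$ and $(0,r(A^{c}))$ to lie in its closure, both of which hold.
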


Let $\gamma^{\ast}(x)=\Pi_{\Psi}\left(  C_{\Psi,\gamma}(x)\,|\,x\right)  $ be
the exact posterior content of the $\gamma$-relative belief region. The
following result generalizes results found in Wasserman (1989) and de la Horra
and Fernandez (1994) who considered robustness to the prior of credible
regions for the full parameter $\theta$. In particular, this result applies to
arbitrary parameters $\psi=\Psi(\theta)$ and does not require continuity.

\begin{proposition}
\label{credregion}The following hold,\newline(i) among all sets $A\subset\Psi$
satisfying $\Pi_{\Psi}\left(  A\,|\,x\right)  \leq\gamma^{\ast}(x)$ and
$r(A)=r(\Psi),$ the set $C_{\Psi,\gamma}(x)$ minimizes $\delta(A),$%
\newline(ii) among all sets $A\subset\Psi$ satisfying $\Pi_{\Psi}\left(
A\,|\,x\right)  \geq\gamma^{\ast}(x)$ and $r(A^{c})=r(\Psi),$ the set
$C_{\Psi,1-\gamma^{\ast}(x)}^{c}(x)$ minimizes $\delta(A),$\newline(iii) when
$\gamma^{\ast}(x)=\gamma\geq1/2$ then, among all sets $A\subset\Psi$
satisfying $\Pi_{\Psi}\left(  A\,|\,x\right)  =\gamma,$ the set $C_{\Psi
,\gamma}(x)$ minimizes $\delta(A).$
\end{proposition}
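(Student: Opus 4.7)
The plan is to work directly from the Huber formula in Lemma \ref{huber1}. Using the identity $(a+\epsilon^{\ast}\rho)/(1+\epsilon^{\ast}\rho)=1-(1-a)/(1+\epsilon^{\ast}\rho)$, the expression there rewrites as
\[
\delta(A)=1-\frac{1-\Pi_{\Psi}(A\,|\,x)}{1+\epsilon^{\ast}r(A)}-\frac{\Pi_{\Psi}(A\,|\,x)}{1+\epsilon^{\ast}r(A^{c})}.
\]
Because exactly one of $r(A),r(A^{c})$ equals $r(\Psi)$, the three claims split by this dichotomy, and the main engine is a monotonicity analysis of $\delta$ in the pair $(a,b)=(\Pi_{\Psi}(A\,|\,x),r(A^{c}))$ under the side constraint $r(A)=r(\Psi)$.

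For part (i), with $r(A)=r(\Psi)$ imposed, the partial $\partial\delta/\partial a=(1+\epsilon^{\ast}r(\Psi))^{-1}-(1+\epsilon^{\ast}b)^{-1}$ is nonpositive since $b\leq r(\Psi)$, while $\partial\delta/\partial b=a\epsilon^{\ast}/(1+\epsilon^{\ast}b)^{2}>0$; so $\delta$ decreases as $a$ grows and increases as $b$ grows. I would then argue that among sets of fixed content $a$ containing the argmax of $RB_{\Psi}(\cdot\,|\,x)$, the super-level set $A^{\ast}=\{RB_{\Psi}(\cdot\,|\,x)\geq k\}$ of that content minimizes $b$: any competitor $A$ of the same content has $\Pi_{\Psi}(A^{\ast}\setminus A\,|\,x)>0$, and since $A^{\ast}\setminus A\subset A^{c}\cap\{RB_{\Psi}\geq k\}$, this forces $r(A^{c})\geq k\geq r(A^{\ast c})$. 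Pushing $a$ up to its largest admissible value $\gamma^{\ast}(x)$ gives the super-level set $C_{\Psi,\gamma}(x)$ as the minimizer.

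Part (ii) then falls out of (i) via the symmetry $\delta(A)=\delta(A^{c})$ in Lemma \ref{huber1}(iv): writing $B=A^{c}$ turns its hypotheses into those of (i) at level $1-\gamma^{\ast}(x)$, whose minimizer is $B=C_{\Psi,1-\gamma^{\ast}(x)}(x)$, i.e.\ $A=C_{\Psi,1-\gamma^{\ast}(x)}^{c}(x)$. For part (iii), when $\gamma=\gamma^{\ast}(x)$ every admissible $A$ falls in case (i) or case (ii), with per-case minimizers $C_{\Psi,\gamma}(x)$ and $C_{\Psi,1-\gamma}^{c}(x)$; invoking $\delta(C_{\Psi,1-\gamma}^{c})=\delta(C_{\Psi,1-\gamma})$ reduces the comparison to $\delta(C_{\Psi,\gamma})\leq\delta(C_{\Psi,1-\gamma})$. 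Letting $c_{1}\leq c_{2}\leq r(\Psi)$ denote the respective cutoffs (with $c_{1}\leq c_{2}$ because $\gamma\geq 1/2$ places $C_{\Psi,\gamma}$ at the larger posterior content), direct substitution yields
\[
\delta(C_{\Psi,\gamma})-\delta(C_{\Psi,1-\gamma})=\frac{2\gamma-1}{1+\epsilon^{\ast}r(\Psi)}-\frac{\gamma}{1+\epsilon^{\ast}c_{1}}+\frac{1-\gamma}{1+\epsilon^{\ast}c_{2}},
\]
and the bounds $\gamma/(1+\epsilon^{\ast}c_{1})\geq\gamma/(1+\epsilon^{\ast}c_{2})$ and $(2\gamma-1)/(1+\epsilon^{\ast}c_{2})\geq(2\gamma-1)/(1+\epsilon^{\ast}r(\Psi))$, both justified by $c_{1}\leq c_{2}\leq r(\Psi)$ together with $2\gamma-1\geq 0$, close the sign.

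The main obstacle I anticipate is making the super-level-set optimality in part (i) rigorous in the absence of continuity of the posterior distribution of $RB_{\Psi}(\cdot\,|\,x)$, so that the swap argument (which implicitly identifies sets agreeing posterior-almost-everywhere) is not spoiled by pathological null-set perturbations that could ostensibly lower $r(A^{c})$ without changing $\Pi_{\Psi}(A\,|\,x)$; this likely requires phrasing the comparison at the level of posterior mass throughout.
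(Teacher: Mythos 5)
Your proposal is correct and follows essentially the same route as the paper: Huber's formula, monotonicity of $\delta(A)$ in the posterior content and in $r(A^{c})$, a level-set argument showing $C_{\Psi,\gamma}(x)$ minimizes $r(A^{c})$ among admissible sets (the paper runs this step as a contradiction via the set $B=\{\psi:RB_{\Psi}(\psi\,|\,x)>r(A^{c})\}$ and the content constraint, which also handles the null-set worry you flag), complementation via $\delta(A)=\delta(A^{c})$ for (ii), and a direct comparison of the two per-case minimizers for (iii). Your part (iii) computation, which explicitly invokes both $c_{1}\leq c_{2}\leq r(\Psi)$ and $2\gamma-1\geq0$, is in fact the complete version of the paper's argument, whose displayed difference carries a $\gamma$ where $1-\gamma=\Pi_{\Psi}(C_{\Psi,1-\gamma}(x)\,|\,x)$ should appear and whose stated justification omits the role of $\gamma\geq1/2$.
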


\begin{proof}
(i) For any set $A$ with $r(A)=r(\Psi)$ then $r(A^{c})-r(A)=r(A^{c}%
)-r(\Psi)\leq0.$ Therefore,
\begin{align*}
\delta(A)  &  =\frac{\Pi_{\Psi}\left(  A\,|\,x\right)  \epsilon^{\ast}%
(r(A^{c})-r(\Psi))}{(1+\epsilon^{\ast}r(\Psi))(1+\epsilon^{\ast}r(A^{c}%
))}+\frac{\epsilon^{\ast}r(\Psi)}{1+\epsilon^{\ast}r(\Psi)}\\
&  \geq\frac{\Pi_{\Psi}\left(  C_{\Psi,\gamma}(x)\,|\,x\right)  \epsilon
^{\ast}(r(A^{c})-r(\Psi))}{(1+\epsilon^{\ast}r(\Psi))(1+\epsilon^{\ast}%
r(A^{c}))}+\frac{\epsilon^{\ast}r(\Psi)}{1+\epsilon^{\ast}r(\Psi)}.
\end{align*}
Now%
\begin{equation}
\frac{r(A^{c})-r(\Psi)}{1+\epsilon^{\ast}r(A^{c})} \label{eq1}%
\end{equation}
is increasing in $r(A^{c}),$ so we need to show that $A=C_{\Psi,\gamma}(x)$
minimizes $r(A^{c})$ among all $A$ satisfying $\Pi_{\Psi}\left(
A\,|\,x\right)  \leq\Pi_{\Psi}\left(  C_{\Psi,\gamma}(x)\,|\,x\right)  $ and
$r(A)=r(\Psi).$ Suppose that $r(A^{c})<r(C_{\Psi,\gamma}^{c}(x))$and let
$B=\{\psi:RB_{\Psi}(\psi\,|\,x)>r(A^{c})\}.\ $Note that $r(C_{\Psi,\gamma}%
^{c}(x))\leq\inf_{\psi\in C_{\Psi,\gamma}(x)}RB_{\Psi}(\psi\,|\,x)$ and so
$C_{\Psi,\gamma}(x)\subset B,$ which implies $\Pi_{\Psi}\left(
B\,|\,x\right)  >\Pi_{\Psi}\left(  C_{\Psi,\gamma}(x)\,|\,x\right)  $ with the
strictness of the inequality following from the definition of $C_{\Psi,\gamma
}(x).$ But also $B\subset A$ which contradicts $\Pi_{\Psi}\left(
A\,|\,x\right)  \leq\Pi_{\Psi}\left(  C_{\Psi,\gamma}(x)\,|\,x\right)  $ and
so we must have $r(A^{c})\geq r(C_{\Psi,\gamma}^{c}(x)).$ This establishes
that (\ref{eq1}) is minimized by $A=C_{\Psi,\gamma}(x).$\newline(ii) Now
consider all the sets $A$ with $r(A^{c})=r(\Psi).$ Since $\delta
(A)=\delta(A^{c}),$ it is equivalent to minimize $\delta(A^{c})$ among all
sets $A^{c}$ satisfying $\Pi_{\Psi}\left(  A^{c}\,|\,x\right)  \leq\Pi_{\Psi
}\left(  C_{\Psi,\gamma}^{c}(x)\,|\,x\right)  =1-\gamma^{\ast}(x)$ and
$r(A^{c})=r(\Psi).$ By part (i) this is minimized by taking $A^{c}%
=C_{\Psi,1-\gamma^{\ast}(x)}(x)$ and the result is proved.\newline(iii) The
solutions to the optimization problems in parts (i) and (ii), namely,
$C_{\Psi,\gamma}(x)$ and $C_{\Psi,1-\gamma}^{c}(x)$ respectively, both have
posterior content equal to $\gamma.$ As such one of these sets is the solution
to the optimization problem stated in (iii). We have that
\begin{align*}
&  \delta(C_{\Psi,\gamma}(x))-\delta(C_{\Psi,1-\gamma}^{c}(x))=\delta
(C_{\Psi,\gamma}(x))-\delta(C_{\Psi,1-\gamma}(x))\\
&  =\frac{\gamma\epsilon^{\ast}(r(C_{\Psi,\gamma}^{c}(x))-r(\Psi
))}{(1+\epsilon^{\ast}r(\Psi))(1+\epsilon r(C_{\Psi,\gamma}^{c}(x)))}%
-\frac{\gamma\epsilon^{\ast}(r(C_{\Psi,1-\gamma}^{c}(x))-r(\Psi))}%
{(1+\epsilon^{\ast}r(\Psi))(1+\epsilon r(C_{\Psi,1-\gamma}^{c}(x)))}\\
&  =\frac{\gamma\epsilon^{\ast}}{(1+\epsilon^{\ast}r(\Psi))}\left\{
\frac{r(C_{\Psi,\gamma}^{c}(x))-r(\Psi)}{1+\epsilon r(C_{\Psi,\gamma}^{c}%
(x))}-\frac{r(C_{\Psi,1-\gamma}^{c}(x))-r(\Psi)}{1+\epsilon r(C_{\Psi
,1-\gamma}^{c}(x))}\right\}  .
\end{align*}
The result follows from this because $C_{\Psi,\gamma}^{c}(x)\subset
C_{\Psi,1-\gamma}^{c}(x),$ so $r(C_{\Psi,\gamma}^{c}(x))\leq r(C_{\Psi
,1-\gamma}^{c}(x)),$ and (\ref{eq1}) is increasing in $r(A^{c}).\smallskip$
\end{proof}

It is interesting to consider the statistical meaning of the separate parts of
Proposition \ref{credregion} as the statements create a degree of ambiguity.
If a system of credible regions is being used, say $B_{\Psi,\gamma}(x),$ then
it makes sense to require that these sets are monotonically increasing in
$\gamma$ and the smallest set $\lim_{\gamma\searrow0}B_{\Psi,\gamma}(x)$
contains a single point which is taken as the estimate of $\psi.$ The size of
$B_{\Psi,\gamma}(x),$ for some specific $\gamma,$ can then be taken as an
assessment of the accuracy of the estimate where size is measured in some
application dependent way. The relative belief regions satisfy this and the
estimate, under the assumption of a unique maximizer of $RB_{\Psi}%
(\cdot\,|\,x),$ is $\psi(x).$ So effectively (i) is saying that $C_{\Psi
,\gamma}(x)$ is the most robust system of credible regions with respect to
posterior content. Note that we have to exclude sets $A$ with $\Pi_{\Psi
}\left(  A\,|\,x\right)  >\gamma^{\ast}(x)$ because, for example, the set
$A=\Psi$ is always optimally robust with respect to content but does not
provide a meaningful assessment of the accuracy of the estimate. Given that
$\psi(x)$ and the form of $C_{\Psi,\gamma}(x)$ are optimally robust, this
further supports the claim that relative belief estimation is optimally robust
to the choice of the marginal prior. Note that the sets in (ii) do not satisfy
the stated criteria for being a system of credible regions.

Part (iii) indicates that, when there are many sets with posterior content
exactly equal to $\gamma,$ and this is typically true in the continuous case,
then $C_{\Psi,\gamma}(x)$ is optimally robust among these sets with respect to
content. It makes sense to require $\gamma^{\ast}(x)\geq1/2$ for any credible
region as, if $\gamma^{\ast}(x)<1/2,$ then there is more belief that the true
value is in $C_{\Psi,\gamma}^{c}(x)$ than in $C_{\Psi,\gamma}(x).$

Applying Lemma 1 gives
\begin{align*}
\delta(C_{\Psi,\gamma}(x))  &  =\frac{\epsilon^{\ast}RB_{\Psi}(\psi(x)\pi
x)}{(1+\epsilon^{\ast}RB_{\Psi}(\psi(x)\,|\,x))}\times\\
&  \left\{  1-\frac{\Pi_{\Psi}\left(  C_{\Psi,\gamma}(x)\,|\,x\right)
}{RB_{\Psi}(\psi(x)\,|\,x)}\frac{1-\inf_{\psi\in C_{\Psi,\gamma}(x)}RB_{\Psi
}(\psi\,|\,x)}{(1+\epsilon^{\ast}\inf_{\psi\in C_{\Psi,\gamma}(x)}RB_{\Psi
}(\psi\,|\,x))}\right\}
\end{align*}
and this can be close to 1 when $RB_{\Psi}(\psi(x)\,|\,x)$ is large. So, while
$C_{\Psi,\gamma}(x)$ possesses an optimal robustness property with respect to
posterior content, this does not imply that the posterior content is
necessarily robust. This depends on other aspects of the particular problem
which will be discussed.

\section{Measuring Robustness Quantitatively}

To measure the robustness of an inference to the prior $\pi,$ when using the
$\epsilon$-contaminated class, it is natural to look at G\^{a}teaux
derivatives of the relevant quantity at $\pi$ in various directions $Q.$ The
derivative is a measure of the sensitivity of the inference to small changes
in the prior and so is local in nature. When the derivative is large for some
$Q,$ the inference is highly sensitive to the prior chosen and naturally this
is viewed negatively. In this section this behavior of relative belief
inferences is analyzed separately for $\epsilon$-contaminated classes for the
marginal $\pi_{\Psi}$ and the conditional $\pi(\cdot\,|\,\psi).$

\subsection{Sensitivity to the Marginal Prior\label{margprior}}

Consider the family of priors given by (\ref{prior1}) but now restricted to
those $Q$ that are also absolutely continuous with respect to $\nu_{\Psi}$ on
$\Psi$ and let $q$ denote the density of $Q.$ The posterior of $\psi$ based on
the contaminated prior is $\Pi_{\epsilon,\Psi}(\cdot\,|\,x)=(1-\epsilon
_{x})\Pi_{\Psi}(\cdot\,|\,x)+\epsilon_{x}Q(\cdot\,|\,x)$ where $\epsilon
_{x}=\epsilon m_{Q}\left(  x\right)  /[(1-\epsilon)m(x)+\epsilon m_{Q}\left(
x\right)  ],m_{Q}(x)=\int_{\Psi}m(x\,|\,\psi)\,Q(d\psi)$ and $Q(A\,|\,x)=\int
_{A}(m(x\,|\,\psi)/m_{Q}(x))\,Q(d\psi).$ The relative belief ratio for $\psi$
based on a general $\Pi_{\epsilon}$ equals $RB_{\epsilon,\Psi}(\psi
\,|\,x)=(1-\epsilon_{x})RB_{\Psi}(\psi\,|\,x)+\epsilon_{x}RB_{Q,\Psi}%
(\psi\,|\,x)$ and here, using (\ref{savagedickey}), $RB_{Q,\Psi}%
(\psi\,|\,x)=m(x\,|\,\psi)/m_{Q}\left(  x\right)  $ so
\begin{equation}
RB_{\epsilon,\Psi}(\psi\,|\,x)=\frac{RB_{\Psi}(\psi\,|\,x)}{1-\epsilon
(1-m_{Q}\left(  x\right)  /m(x))}. \label{robustrelbel}%
\end{equation}
The following result gives the G\^{a}teaux derivative of the relative belief ratio.

\begin{proposition}
\label{relbelmarg}The \textit{G\^{a}teaux derivative} of $RB_{\Psi}%
(\cdot\,|\,x)$ at $\psi$ in the direction $Q$ equals
\begin{equation}
RB_{\Psi}(\psi\,|\,x)\left\{  1-m_{Q}\left(  x\right)  /m\left(  x\right)
\right\}  . \label{gatrelbeltatio}%
\end{equation}

\end{proposition}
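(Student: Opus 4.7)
The plan is to compute the G\^{a}teaux derivative directly from the closed-form expression (\ref{robustrelbel}), which is established in the paragraph preceding the statement. By definition, the G\^{a}teaux derivative of $RB_{\Psi}(\cdot\,|\,x)$ at $\psi$ in the direction $Q$ is
$$\lim_{\epsilon\downarrow 0}\frac{RB_{\epsilon,\Psi}(\psi\,|\,x)-RB_{\Psi}(\psi\,|\,x)}{\epsilon},$$
so the problem reduces to a one-variable limit.

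First I would abbreviate $a=1-m_Q(x)/m(x)$, so that (\ref{robustrelbel}) reads $RB_{\epsilon,\Psi}(\psi\,|\,x)=RB_{\Psi}(\psi\,|\,x)/(1-\epsilon a)$. Subtracting $RB_{\Psi}(\psi\,|\,x)$, dividing by $\epsilon$, and simplifying then yields
$$\frac{RB_{\epsilon,\Psi}(\psi\,|\,x)-RB_{\Psi}(\psi\,|\,x)}{\epsilon}=\frac{RB_{\Psi}(\psi\,|\,x)}{\epsilon}\cdot\frac{\epsilon a}{1-\epsilon a}=\frac{a\cdot RB_{\Psi}(\psi\,|\,x)}{1-\epsilon a},$$
which converges to $a\cdot RB_{\Psi}(\psi\,|\,x)=RB_{\Psi}(\psi\,|\,x)\{1-m_Q(x)/m(x)\}$ as $\epsilon\downarrow 0$. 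This is exactly (\ref{gatrelbeltatio}).

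Essentially no obstacle remains once (\ref{robustrelbel}) is in hand. The step that carries any real content, and which has already been done in the paragraph before the proposition, is the derivation of (\ref{robustrelbel}) itself: one writes the posterior under the contaminated prior as the mixture $(1-\epsilon_x)\Pi_\Psi(\cdot\,|\,x)+\epsilon_x Q(\cdot\,|\,x)$ with weight $\epsilon_x=\epsilon m_Q(x)/[(1-\epsilon)m(x)+\epsilon m_Q(x)]$, and then applies the Savage-Dickey identity (\ref{savagedickey}) to each component's relative belief ratio, producing the factors $m(x\,|\,\psi)/m(x)$ and $m(x\,|\,\psi)/m_Q(x)$ respectively. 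Given that, Proposition \ref{relbelmarg} follows from the elementary one-line limit above; the only care needed is to verify that the algebraic simplification collapses cleanly to the form (\ref{robustrelbel}), after which differentiation at $\epsilon=0$ is trivial.
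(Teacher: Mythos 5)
Your proposal is correct and follows the paper's own proof exactly: both start from the closed form (\ref{robustrelbel}) and compute the one-variable limit $\lim_{\epsilon\rightarrow0}RB_{\Psi}(\psi\,|\,x)\,a/(1-\epsilon a)$ with $a=1-m_{Q}(x)/m(x)$. Your additional remarks on the derivation of (\ref{robustrelbel}) simply restate what the paper establishes in the paragraph preceding the proposition.
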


\begin{proof}
From (\ref{robustrelbel}),%
\[
\lim_{\epsilon\rightarrow0}\frac{RB_{\Psi}^{\epsilon}(\psi\,|\,x)-RB_{\Psi
}(\psi\,|\,x)}{\epsilon}=RB_{\Psi}(\psi\,|\,x)\lim_{\epsilon\rightarrow
0}\left\{  \frac{(1-m_{Q}\left(  x\right)  /m(x))}{1-\epsilon(1-m_{Q}\left(
x\right)  /m(x))}\right\}  .
\]
\smallskip
\end{proof}

\noindent The value of (\ref{gatrelbeltatio}) can be large simply because
$RB_{\Psi}(\psi\,|\,x)$ is large, so it makes more sense to look at the
relative change as given by $1-m_{Q}\left(  x\right)  /m\left(  x\right)  .$
Therefore, for small $\epsilon,$%
\[
\frac{\left\vert RB_{\epsilon,\Psi}(\psi\,|\,x)-RB_{\Psi}(\psi
\,|\,x)\right\vert }{RB_{\Psi}(\psi\,|\,x)}\approx\left\vert 1-\frac{m_{Q}%
(x)}{m(x)}\right\vert \epsilon
\]
implying a small relative change in $RB_{\Psi}(\psi\,|\,x)$ when
$m_{Q}(x)/m(x)$ is not large.

The G\^{a}teaux derivative of the strength of the evidence is now computed.

\begin{proposition}
The \textit{G\^{a}teaux} derivative of $\Pi_{\Psi}(RB_{\Psi}(\psi\,|\,x)\leq
RB_{\Psi}(\psi_{0}\,|\,x)\,|\,x)$ at $\psi_{0}$ in the direction $Q$ is
\[
\frac{m_{Q}\left(  x\right)  }{m(x)}\left\{
\begin{array}
[c]{c}%
Q(RB_{\Psi}(\psi\,|\,x)\leq RB_{\Psi}(\psi_{0}\,|\,x)\,|\,x)-\\
\Pi_{\Psi}(RB_{\Psi}(\psi\,|\,x)\leq RB_{\Psi}(\psi_{0}\,|\,x)\,|\,x)
\end{array}
\right\}  .
\]

\end{proposition}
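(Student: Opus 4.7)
The plan is to reduce everything to the mixture representation of the contaminated posterior already derived in Section 4.1, and then take a one-variable derivative in $\epsilon$. The strength under the contaminated prior $\Pi_\epsilon$ is
$$S_\epsilon(\psi_0)=\Pi_{\epsilon,\Psi}\bigl(RB_{\epsilon,\Psi}(\psi\,|\,x)\le RB_{\epsilon,\Psi}(\psi_0\,|\,x)\,\bigl|\,x\bigr).$$
The first key observation I would make is that the event does not depend on $\epsilon$. Indeed, by (\ref{robustrelbel}), $RB_{\epsilon,\Psi}(\psi\,|\,x)$ equals $RB_{\Psi}(\psi\,|\,x)$ times the factor $1/(1-\epsilon(1-m_Q(x)/m(x)))$, which is constant in $\psi$. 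Hence the inequality $RB_{\epsilon,\Psi}(\psi\,|\,x)\le RB_{\epsilon,\Psi}(\psi_0\,|\,x)$ is equivalent to $RB_\Psi(\psi\,|\,x)\le RB_\Psi(\psi_0\,|\,x)$, so writing $A=\{\psi:RB_\Psi(\psi\,|\,x)\le RB_\Psi(\psi_0\,|\,x)\}$ we get a fixed set.

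Next I would use the mixture form of the contaminated posterior given in Section 4.1: $\Pi_{\epsilon,\Psi}(\cdot\,|\,x)=(1-\epsilon_x)\Pi_\Psi(\cdot\,|\,x)+\epsilon_x Q(\cdot\,|\,x)$ with $\epsilon_x=\epsilon m_Q(x)/[(1-\epsilon)m(x)+\epsilon m_Q(x)]$. Applying this to the fixed set $A$ yields
$$S_\epsilon(\psi_0)=\Pi_\Psi(A\,|\,x)+\epsilon_x\bigl(Q(A\,|\,x)-\Pi_\Psi(A\,|\,x)\bigr).$$
At $\epsilon=0$ we have $\epsilon_x=0$, so the Gâteaux derivative reduces to $(d\epsilon_x/d\epsilon)|_{\epsilon=0}$ times $Q(A\,|\,x)-\Pi_\Psi(A\,|\,x)$.

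The remaining step is a one-line quotient-rule computation: differentiating $\epsilon_x$ with respect to $\epsilon$ and evaluating at $0$ gives $m_Q(x)/m(x)$. Putting the pieces together produces exactly the expression stated in the proposition. There is no real obstacle here; the only thing that could trip one up is forgetting to check that the set $A$ is $\epsilon$-independent, and the proof would then incorrectly pick up an extra boundary term from the changing level set. Once that is noted, everything else is straightforward bookkeeping.
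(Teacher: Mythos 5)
Your proposal is correct and follows essentially the same route as the paper: both reduce the event to the $\epsilon$-independent set $\{\psi:m(x\,|\,\psi)\le m(x\,|\,\psi_0)\}$ via (\ref{robustrelbel}), apply the mixture form $\Pi_{\epsilon,\Psi}(\cdot\,|\,x)=(1-\epsilon_x)\Pi_{\Psi}(\cdot\,|\,x)+\epsilon_x Q(\cdot\,|\,x)$, and use $\lim_{\epsilon\rightarrow 0}\epsilon_x/\epsilon=m_Q(x)/m(x)$. Your explicit remark that the scaling factor in (\ref{robustrelbel}) is constant in $\psi$, so no boundary term arises from a shifting level set, is exactly the point the paper relies on implicitly.
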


\begin{proof}
The strength based on $\Pi_{\Psi}^{\epsilon}$ satisfies $\Pi_{\Psi}^{\epsilon
}(RB_{\epsilon,\Psi}(\psi\,|\,x)\leq RB_{\epsilon,\Psi}(\psi_{0}%
\,|\,x)\,|\,x)$\newline$=\left(  1-\epsilon_{x}\right)  \Pi_{\Psi
}(RB_{\epsilon,\Psi}(\psi\,|\,x)\leq RB_{\epsilon,\Psi}(\psi_{0}%
\,|\,x)\,|\,x)+\epsilon_{x}Q(RB_{\epsilon,\Psi}(\psi\,|\,x)\leq RB_{\epsilon
,\Psi}(\psi_{0}\,\newline|\,x)\,|\,x).$ So, using (\ref{robustrelbel}),%
\begin{align*}
\Pi_{\Psi}^{\epsilon}(RB_{\epsilon,\Psi}(\psi\,|\,x)  &  \leq RB_{\epsilon
,\Psi}(\psi_{0}\,|\,x)\,|\,x)=\Pi_{\Psi}(m(x\,|\,\psi)\leq m(x\,|\,\psi
_{0})\,|\,x)+\\
&  \epsilon_{x}\left\{  Q(m(x\,|\,\psi)\leq m(x\,|\,\psi_{0})\,|\,x)-\Pi
_{\Psi}(m(x\,|\,\psi)\leq m(x\,|\,\psi_{0})\,|\,x)\right\}  .
\end{align*}
This implies that
\begin{align*}
&  \lim_{\epsilon\rightarrow0}[\Pi_{\Psi}^{\epsilon}(RB_{\Psi}^{\epsilon}%
(\psi_{0}\,|\,x)\leq RB_{\Psi}^{\epsilon}(\psi_{0}\,|\,x)\,|\,x)-\Pi_{\Psi
}(RB_{\Psi}(\psi\,|\,x)\leq RB_{\Psi}(\psi_{0}\,|\,x)\,|\,x)]/\epsilon\\
&  =\frac{m_{Q}\left(  x\right)  }{m(x)}\left\{  Q(m(x\,|\,\psi)\leq
m(x\,|\,\psi_{0})\,|\,x)-\Pi_{\Psi}(m(x\,|\,\psi)\leq m(x\,|\,\psi
_{0})\,|\,x)\right\} \\
&  =\frac{m_{Q}\left(  x\right)  }{m(x)}\left\{
\begin{array}
[c]{c}%
Q(RB_{\Psi}(\psi\,|\,x)\leq RB_{\Psi}(\psi_{0}\,|\,x)\,|\,x)-\\
\Pi_{\Psi}(RB_{\Psi}(\psi\,|\,x)\leq RB_{\Psi}(\psi_{0}\,|\,x)\,|\,x)
\end{array}
\right\}  .
\end{align*}

\end{proof}

\noindent So the strength is robust to choice of the marginal prior $\pi
_{\Psi}$ whenever $m_{Q}(x)/m(x)$ is small.

For both the measure of evidence $RB_{\Psi}(\psi_{0}\,|\,x)$ and its strength,
the ratio $m_{Q}(x)/m(x)$ plays a key role in determining the robustness. The
implications of this are discussed in\ Section 5. Note that $\sup_{Q}%
m_{Q}(x)/m(x)=RB(\psi(x)\,|\,x)$ gives the worst case behavior of this ratio.

It is of interest to contrast these results with those for the commonly used
MAP inferences which are based on the posterior density $\pi_{\Psi}\left(
\cdot\,|\,x\right)  .$

\begin{proposition}
The \textit{G\^{a}teaux derivative of the posterior density of }$\psi$ in the
direction $Q$ at $\psi_{0}$\ is given by $\{m_{Q}(x)/m(x)\}\{q\left(  \psi
_{0}\,|\,x\right)  -\pi_{\Psi}\left(  \psi_{0}\,|\,x\right)  \}.$
\end{proposition}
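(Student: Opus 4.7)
The plan is to mimic the mixture decomposition used in the previous two propositions: write the posterior density of $\psi$ under the contaminated prior $\Pi_\epsilon$ as a convex combination of $\pi_\Psi(\cdot\,|\,x)$ and the posterior $q(\cdot\,|\,x)$ coming from $Q$, then differentiate at $\epsilon=0$. The contamination only enters through the marginal, so the numerator in the posterior is $m(x\,|\,\psi)[(1-\epsilon)\pi_\Psi(\psi)+\epsilon q(\psi)]$ and the normalizing constant is $(1-\epsilon)m(x)+\epsilon m_Q(x)$. Rearranging exactly as the paper does in the definition of $\epsilon_x$ gives
\[
\pi_{\epsilon,\Psi}(\psi_0\,|\,x)=(1-\epsilon_x)\pi_\Psi(\psi_0\,|\,x)+\epsilon_x q(\psi_0\,|\,x),
\]
where $q(\psi_0\,|\,x)=m(x\,|\,\psi_0)q(\psi_0)/m_Q(x)$ is the posterior density of $\psi$ under the prior $Q$.

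Next I would form the difference quotient. From the display above,
\[
\pi_{\epsilon,\Psi}(\psi_0\,|\,x)-\pi_\Psi(\psi_0\,|\,x)=\epsilon_x\{q(\psi_0\,|\,x)-\pi_\Psi(\psi_0\,|\,x)\},
\]
so dividing by $\epsilon$ and passing to the limit reduces the problem to computing $\lim_{\epsilon\rightarrow0}\epsilon_x/\epsilon$. Using the explicit form $\epsilon_x=\epsilon m_Q(x)/[(1-\epsilon)m(x)+\epsilon m_Q(x)]$ this limit is immediately $m_Q(x)/m(x)$, and assembling the factors yields
\[
\lim_{\epsilon\rightarrow0}\frac{\pi_{\epsilon,\Psi}(\psi_0\,|\,x)-\pi_\Psi(\psi_0\,|\,x)}{\epsilon}=\frac{m_Q(x)}{m(x)}\{q(\psi_0\,|\,x)-\pi_\Psi(\psi_0\,|\,x)\},
\]
as required.

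There is no real obstacle: the argument is a direct specialization of the mixture calculation already used in the preceding two propositions, and no auxiliary tool (no interchange of limit and integral, no Savage--Dickey identity) is needed beyond the algebraic identity $\pi_{\epsilon,\Psi}=(1-\epsilon_x)\pi_\Psi(\cdot\,|\,x)+\epsilon_x q(\cdot\,|\,x)$. The only point worth flagging is the contrast with Proposition~\ref{relbelmarg}: here the prefactor is $m_Q(x)/m(x)$ rather than $1-m_Q(x)/m(x)$, so that sensitivity of the MAP-type inference to $Q$ is governed by the same ratio, and in particular can be large precisely when $Q$ concentrates near $\psi(x)$ — a point the authors likely exploit in Section~5 when tying robustness to prior--data conflict.
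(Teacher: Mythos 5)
Your proof is correct and is essentially identical to the paper's: both rest on the mixture identity $\pi_{\epsilon,\Psi}(\psi_0\,|\,x)=(1-\epsilon_x)\pi_\Psi(\psi_0\,|\,x)+\epsilon_x q(\psi_0\,|\,x)$ and the observation that $\epsilon_x/\epsilon\rightarrow m_Q(x)/m(x)$ as $\epsilon\rightarrow 0$. You simply fill in the algebra that the paper leaves implicit.
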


\begin{proof}
Since $\pi_{\epsilon,\Psi}\left(  \psi\,|\,x\right)  =(1-\epsilon_{x}%
)\pi_{\Psi}(\psi\,|\,x)+\epsilon_{x}q(\psi|\,x)$ it follows that
\[
\lim_{\epsilon\rightarrow0}\frac{\pi_{\epsilon,\Psi}\left(  \psi
_{0}\,|\,x\right)  -\pi_{\Psi}\left(  \psi_{0}\,|\,x\right)  }{\epsilon}%
=\frac{m_{Q}\left(  x\right)  }{m(x)}(q\left(  \psi_{0}\,|\,x\right)
-\pi_{\Psi}\left(  \psi_{0}\,|\,x\right)  ).
\]

\end{proof}

\noindent Note that MAP-based inferences implicitly use $\pi_{\Psi}\left(
\psi_{0}\,|\,x\right)  $ as a measure of the evidence that $\psi_{0}$ is the
true value. Comparing this with the relative belief ratio we see that for
small $\epsilon,$%
\[
\frac{\left\vert \pi_{\epsilon,\Psi}\left(  \psi_{0}\,|\,x\right)  -\pi_{\Psi
}(\psi_{0}\,|\,x)\right\vert }{\pi_{\Psi}(\psi_{0}\,|\,x)}\approx\frac
{m_{Q}(x)}{m(x)}\left\vert 1-\frac{q\left(  \psi_{0}\,|\,x\right)  }{\pi
_{\Psi}\left(  \psi_{0}\,|\,x\right)  }\right\vert \epsilon
\]
and the relative change in $\pi_{\Psi}\left(  \psi_{0}\,|\,x\right)  $ is
dependent on the ratio of the posteriors as well as $m_{Q}(x)/m(x).$ So if
$\pi_{\Psi}\left(  \psi_{0}\,|\,x\right)  $ is small relative to $q\left(
\psi_{0}\,|\,x\right)  $ we will get a big relative change and this suggests
that MAP inferences are much less robust than relative belief inferences. A
similar result is obtained for the Bayesian p-value in Evans and Zou (2001).

\subsection{Sensitivity to\ the Conditional Prior}

Consider now priors for\ $\theta$ of the form $\Pi_{\epsilon}=[(1-\epsilon
)\Pi(\cdot\,|\,\psi)+\epsilon Q(\cdot\,|\,\psi)]\times\Pi_{\Psi}$ where
$Q(\cdot\,|\,\psi)$ is a probability measure on $\Psi^{-1}\{\psi\}$ absolutely
continuous with respect to $\nu_{\Psi^{-1}\{\psi\}}$ with density
$q(\cdot\,|\,\psi),$ for each $\psi\in\Psi.$ So the marginal prior of $\psi$
is now fixed and the conditional prior of $\theta$ is perturbed. The posterior
of $\psi$ based on this prior is $\Pi_{\epsilon,\Psi}\left(  \cdot
\,|\,x\right)  =\left(  1-\epsilon_{x}\right)  \Pi_{\Psi}\left(
\cdot\,|\,x\right)  +\epsilon_{x}Q_{\Psi}\left(  \cdot\,|\,x\right)  $ where
$Q_{\Psi}\left(  A\,|\,x\right)  =\int_{A}(m_{Q}(x\,|\,\psi)/m_{Q}%
(x))\,\Pi_{\Psi}(d\psi),m_{Q}(x\,|\,\psi)=\int_{\Psi^{-1}\{\psi\}}f_{\theta
}(x)\,Q(d\theta\,|\,\psi)$ and $m_{Q}(x)=\int_{\Psi}m_{Q}(x\,|\,\psi
)\,\Pi_{\Psi}(d\psi).$

The relative belief ratio for $\psi$ based on $\Pi_{\epsilon}$ equals
$RB_{\epsilon,\Psi}(\psi\,|\,x)=(1-\epsilon_{x})RB_{\Psi}(\psi\,|\,x)+\epsilon
_{x}RB_{Q,\Psi}(\psi\,|\,x)$ where now $B_{Q,\Psi}(\psi_{0}\,|\,x)=m_{Q}%
(x\,|\,\psi_{0})/m_{Q}(x).$ This leads to the following result.

\begin{proposition}
\label{relbelcond}The G\^{a}teaux derivative of $RB_{\Psi}(\cdot\,|\,x)$ at
$\psi_{0}$ in the direction $Q$ is $\{m_{Q}(x)/m\left(  x\right)
\}(RB_{Q,\Psi}(\psi_{0}\,|\,x)-RB_{\Psi}(\psi_{0}\,|\,x)).$
\end{proposition}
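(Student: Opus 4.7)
The plan is to mirror the structure used to derive (\ref{gatrelbeltatio}) in the marginal-prior setting, but now exploiting the mixture decomposition of the contaminated relative belief ratio that is stated immediately before the proposition. Specifically, starting from
\[
RB_{\epsilon,\Psi}(\psi_{0}\,|\,x)=(1-\epsilon_{x})RB_{\Psi}(\psi_{0}\,|\,x)+\epsilon_{x}RB_{Q,\Psi}(\psi_{0}\,|\,x),
\]
the first step is to subtract $RB_{\Psi}(\psi_{0}\,|\,x)$ from both sides so that all the $\epsilon$-dependence collapses into a single factor of $\epsilon_{x}$, giving
\[
RB_{\epsilon,\Psi}(\psi_{0}\,|\,x)-RB_{\Psi}(\psi_{0}\,|\,x)=\epsilon_{x}\bigl(RB_{Q,\Psi}(\psi_{0}\,|\,x)-RB_{\Psi}(\psi_{0}\,|\,x)\bigr).
\]
The second factor is independent of $\epsilon$, so the entire $\epsilon\to 0$ analysis reduces to computing $\lim_{\epsilon\to 0}\epsilon_{x}/\epsilon$.

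For the second step I would substitute the explicit form $\epsilon_{x}=\epsilon m_{Q}(x)/[(1-\epsilon)m(x)+\epsilon m_{Q}(x)]$, which is the same mixture-weight identity used earlier (only now $m_{Q}(x)=\int_{\Psi}m_{Q}(x\,|\,\psi)\,\Pi_{\Psi}(d\psi)$ with $m_{Q}(x\,|\,\psi)$ built by integrating $f_{\theta}(x)$ against $Q(\cdot\,|\,\psi)$ on the fibre $\Psi^{-1}\{\psi\}$). Dividing by $\epsilon$ and letting $\epsilon\to 0$ gives
\[
\lim_{\epsilon\to 0}\frac{\epsilon_{x}}{\epsilon}=\lim_{\epsilon\to 0}\frac{m_{Q}(x)}{(1-\epsilon)m(x)+\epsilon m_{Q}(x)}=\frac{m_{Q}(x)}{m(x)}.
\]
Combining this with the display above yields the stated Gâteaux derivative $\{m_{Q}(x)/m(x)\}\bigl(RB_{Q,\Psi}(\psi_{0}\,|\,x)-RB_{\Psi}(\psi_{0}\,|\,x)\bigr)$.

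I do not anticipate a genuine obstacle here; the nonlinearity of $RB_{\epsilon,\Psi}$ in $\epsilon$ is entirely absorbed into the mixture weight $\epsilon_{x}$, and its first-order behaviour at $0$ is a one-line limit. The only bookkeeping worth flagging is that, although the symbols $m_{Q}$ and $RB_{Q,\Psi}$ were already used in the marginal-prior subsection, they have a different meaning now: the perturbation acts on each conditional prior $\Pi(\cdot\,|\,\psi)$ separately, so $m_{Q}(x\,|\,\psi)$ is a conditional prior predictive computed from $Q(\cdot\,|\,\psi)$ rather than a marginal predictive computed from a probability measure on $\Psi$. Once this distinction is made explicit, verifying that the mixture decomposition of $\Pi_{\epsilon,\Psi}(\cdot\,|\,x)$ given in the text does indeed translate, via (\ref{savagedickey}) applied fibre-by-fibre, into the stated decomposition of $RB_{\epsilon,\Psi}(\psi_{0}\,|\,x)$ is the only point needing care, and is essentially a rewriting of Bayes' rule.
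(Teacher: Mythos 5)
Your proposal is correct and follows essentially the same route as the paper: the paper's proof is precisely the one-line computation $\lim_{\epsilon\rightarrow0}[RB_{\epsilon,\Psi}(\psi_{0}\,|\,x)-RB_{\Psi}(\psi_{0}\,|\,x)]/\epsilon$ obtained from the mixture decomposition $RB_{\epsilon,\Psi}=(1-\epsilon_{x})RB_{\Psi}+\epsilon_{x}RB_{Q,\Psi}$ together with $\epsilon_{x}/\epsilon\rightarrow m_{Q}(x)/m(x)$, which is exactly what you spell out. Your remark that $m_{Q}$ and $RB_{Q,\Psi}$ now refer to fibre-by-fibre perturbations of the conditional prior, rather than to a contamination of the marginal, is a correct and worthwhile clarification of notation the paper leaves implicit.
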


\begin{proof}
Clearly,%
\[
\lim_{\epsilon\rightarrow0}\frac{RB_{\epsilon,\Psi}(\psi_{0}\,|\,x)-RB_{\Psi
}(\psi_{0}\,|\,x)}{\epsilon}=\frac{m_{Q}\left(  x\right)  }{m(x)}(RB_{Q,\Psi
}(\psi_{0}\,|\,x)-RB_{\Psi}(\psi_{0}\,|\,x)).\smallskip
\]

\end{proof}

\noindent The implications of this result for robustness are discussed in
Section 5.

Now consider the robustness of the strength of the evidence.

\begin{proposition}
\label{condpriorstrength}If $RB_{\Psi}(\cdot\,|\,x)$ has a discrete
distribution with support containing no limit points, the \textit{G\^{a}teaux}
derivative of $\Pi_{\Psi}(RB_{\Psi}(\psi\,|\,x)\leq RB_{\Psi}(\psi
_{0}\,|\,x)\,|\,x)$ at $\psi_{0}$ in the direction $Q$ equals 0. When
$RB_{\Psi}(\cdot\,|\,x)$ has a continuous distribution under $\Pi_{\Psi}%
(\cdot\,|\,x)$ with density $g(\cdot\,|\,x),$ the \textit{G\^{a}teaux}
derivative of $\Pi_{\Psi}(RB_{\Psi}(\psi\,|\,x)\leq RB_{\Psi}(\psi
_{0}\,|\,x)\,|\,x)$ at $\psi_{0}$ in the direction $Q$ equals%
\[
\{m_{Q}(x)/m\left(  x\right)  )\}RB_{Q,\Psi}(\psi_{0}\,|\,x)g(RB_{\Psi}%
(\psi_{0}\,|\,x)\,|\,x).
\]

\end{proposition}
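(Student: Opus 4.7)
The plan mirrors the proofs of the preceding G\^{a}teaux-derivative propositions: expand $S_{\epsilon}:=\Pi_{\epsilon,\Psi}(A_{\epsilon}\,|\,x)=(1-\epsilon_{x})\Pi_{\Psi}(A_{\epsilon}\,|\,x)+\epsilon_{x}Q_{\Psi}(A_{\epsilon}\,|\,x)$ with $A_{\epsilon}=\{\psi:RB_{\epsilon,\Psi}(\psi\,|\,x)\leq RB_{\epsilon,\Psi}(\psi_{0}\,|\,x)\}$, and differentiate at $\epsilon=0$. The essential new feature is that $A_{\epsilon}$ now depends on $\epsilon$, since $RB_{\epsilon,\Psi}(\psi\,|\,x)=(1-\epsilon_{x})RB_{\Psi}(\psi\,|\,x)+\epsilon_{x}RB_{Q,\Psi}(\psi\,|\,x)$ is a proper mixture of two distinct functions of $\psi$. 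Setting $\lambda_{\epsilon}=\epsilon_{x}/(1-\epsilon_{x})$ and rearranging, one writes $A_{\epsilon}=\{RB_{\Psi}(\psi\,|\,x)\leq RB_{\Psi}(\psi_{0}\,|\,x)+\lambda_{\epsilon}[RB_{Q,\Psi}(\psi_{0}\,|\,x)-RB_{Q,\Psi}(\psi\,|\,x)]\}$, making the $\epsilon$-dependence of the event explicit. This contrasts with the analogous marginal-prior result, in which the corresponding event was $\epsilon$-free and only the mixture weights contributed to the derivative.

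In the discrete case, I would invoke the no-limit-points assumption to produce a positive gap $\delta$ between any two distinct values in the support of $RB_{\Psi}(\cdot\,|\,x)$. For $\epsilon$ so small that $\lambda_{\epsilon}\sup_{\psi}|RB_{Q,\Psi}(\psi_{0}\,|\,x)-RB_{Q,\Psi}(\psi\,|\,x)|<\delta$, the perturbation of the threshold cannot move any $\psi$ with $RB_{\Psi}(\psi\,|\,x)\neq RB_{\Psi}(\psi_{0}\,|\,x)$ across the boundary, so $A_{\epsilon}$ coincides with $A_{0}$ off the tie set $\{\psi:RB_{\Psi}(\psi\,|\,x)=RB_{\Psi}(\psi_{0}\,|\,x)\}$. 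Taking ties to contribute nothing at first order (the natural reading for isolated atoms), $S_{\epsilon}$ is locally constant in $\epsilon$ and the G\^{a}teaux derivative vanishes.

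In the continuous case, I would use the identity $S_{\epsilon}=G_{\epsilon}^{\mathrm{post}}(RB_{\epsilon,\Psi}(\psi_{0}\,|\,x))$, where $G_{\epsilon}^{\mathrm{post}}$ is the $\Pi_{\epsilon,\Psi}(\cdot\,|\,x)$-CDF of $RB_{\epsilon,\Psi}(\psi\,|\,x)$, and apply the chain rule. The three ingredients are (i) $d\epsilon_{x}/d\epsilon|_{\epsilon=0}=m_{Q}(x)/m(x)$, obtained from the definition of $\epsilon_{x}$; (ii) the density $g(\cdot\,|\,x)$ of $RB_{\Psi}(\cdot\,|\,x)$ under $\Pi_{\Psi}(\cdot\,|\,x)$ at $\epsilon=0$; and (iii) a boundary-motion argument using the joint posterior distribution of $(RB_{\Psi}(\psi\,|\,x),RB_{Q,\Psi}(\psi\,|\,x))$ to evaluate $\partial_{\epsilon}\Pi_{\Psi}(A_{\epsilon}\,|\,x)|_{\epsilon=0}$ via a standard differentiation-of-CDF identity.

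The main obstacle is algebraic. The derivative naturally splits into pieces: the shifting mixture weights contribute $(m_{Q}(x)/m(x))[Q_{\Psi}(A_{0}\,|\,x)-\Pi_{\Psi}(A_{0}\,|\,x)]$, the moving threshold $RB_{\epsilon,\Psi}(\psi_{0}\,|\,x)$ contributes a term proportional to $g(RB_{\Psi}(\psi_{0}\,|\,x)\,|\,x)(RB_{Q,\Psi}(\psi_{0}\,|\,x)-RB_{\Psi}(\psi_{0}\,|\,x))$, and the moving boundary of $A_{\epsilon}$ contributes a conditional-expectation term involving $E[RB_{Q,\Psi}(\psi\,|\,x)\mid RB_{\Psi}(\psi\,|\,x)=RB_{\Psi}(\psi_{0}\,|\,x),x]$. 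Collapsing these into the single product $\{m_{Q}(x)/m(x)\}\,RB_{Q,\Psi}(\psi_{0}\,|\,x)\,g(RB_{\Psi}(\psi_{0}\,|\,x)\,|\,x)$ is the delicate step, and the Savage-Dickey identity $RB_{Q,\Psi}(\psi\,|\,x)=m_{Q}(x\,|\,\psi)/m_{Q}(x)$ is the natural tool for the required cancellations.
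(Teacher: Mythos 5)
Your discrete-case argument is essentially the paper's: the no-limit-points hypothesis gives a positive gap, so for small $\epsilon$ the perturbation cannot move mass across the threshold and the derivative is $0$. (The paper handles the tie set you worry about by a one-sided sandwich rather than by fiat, but the substance is the same.) However, your setup differs from the paper's in a way that matters for the continuous case. The quantity the paper differentiates is $\Pi_{\Psi}(RB_{\epsilon,\Psi}(\psi\,|\,x)\leq RB_{\epsilon,\Psi}(\psi_{0}\,|\,x)\,|\,x)$ with the \emph{unperturbed} posterior $\Pi_{\Psi}(\cdot\,|\,x)$ as the measuring measure; only the random variable $RB_{\epsilon,\Psi}$ carries the $\epsilon$. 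Consequently the mixture-weight term $(m_{Q}(x)/m(x))[Q_{\Psi}(A_{0}\,|\,x)-\Pi_{\Psi}(A_{0}\,|\,x)]$ that you build into your decomposition never arises in the paper's computation.

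The genuine gap is in your continuous case. You correctly identify that an exact treatment of the moving event $A_{\epsilon}=\{RB_{\Psi}(\psi\,|\,x)\leq RB_{\Psi}(\psi_{0}\,|\,x)+\lambda_{\epsilon}[RB_{Q,\Psi}(\psi_{0}\,|\,x)-RB_{Q,\Psi}(\psi\,|\,x)]\}$ produces a boundary term involving $E[RB_{Q,\Psi}(\psi\,|\,x)\mid RB_{\Psi}(\psi\,|\,x)=RB_{\Psi}(\psi_{0}\,|\,x),x]$, but you then assert that this, together with the other pieces, collapses into the single product $\{m_{Q}(x)/m(x)\}RB_{Q,\Psi}(\psi_{0}\,|\,x)g(RB_{\Psi}(\psi_{0}\,|\,x)\,|\,x)$, offering the Savage--Dickey identity as the tool without carrying out the cancellation. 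No such cancellation occurs in general: the conditional expectation of $RB_{Q,\Psi}(\psi\,|\,x)$ on the level set has no reason to vanish or to combine with $RB_{Q,\Psi}(\psi_{0}\,|\,x)$ into that product. The paper avoids the boundary term entirely by a cruder device: since $\epsilon_{x}RB_{Q,\Psi}(\psi\,|\,x)\geq0$, the event is bounded (from above for $\epsilon>0$, from below for $\epsilon<0$) by $\{RB_{\Psi}(\psi\,|\,x)\leq RB_{\Psi}(\psi_{0}\,|\,x)+\lambda_{\epsilon}RB_{Q,\Psi}(\psi_{0}\,|\,x)\}$, whose probability is the posterior CDF of $RB_{\Psi}(\cdot\,|\,x)$ evaluated at a shifted argument; differentiating that CDF with $d\lambda_{\epsilon}/d\epsilon|_{\epsilon=0}=m_{Q}(x)/m(x)$ gives exactly the stated product, with only the threshold motion contributing. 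Your plan, carried out honestly, would yield a different and more complicated expression, so as written the proposal does not establish the proposition.
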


\begin{proof}
Since%
\begin{align*}
&  \Pi_{\Psi}(RB_{\epsilon,\Psi}(\psi\,|\,x)\leq RB_{\epsilon,\Psi}(\psi
_{0}\,|\,x)\,|\,x)\\
&  =\Pi_{\Psi}\left(
\begin{array}
[c]{c}%
(1-\epsilon_{x})RB_{\Psi}(\psi\,|\,x)+\epsilon_{x}RB_{Q,\Psi}(\psi\,|\,x)\\
\leq(1-\epsilon_{x})RB_{\Psi}(\psi_{0}\,|\,x)+\epsilon_{x}RB_{Q,\Psi}(\psi
_{0}\,|\,x)
\end{array}
\,|\,x\right)  ,
\end{align*}
then, for all $\epsilon>0$ such that $\epsilon_{x}\leq1,$%
\begin{align*}
&  \Pi_{\Psi}(RB_{\epsilon,\Psi}(\psi\,|\,x)\leq RB_{\epsilon,\Psi}(\psi
_{0}\,|\,x)\,|\,x)\\
&  \leq\Pi_{\Psi}\left(  RB_{\Psi}(\psi\,|\,x)\leq RB_{\Psi}(\psi
_{0}\,|\,x)+\frac{\epsilon_{x}}{1-\epsilon_{x}}RB_{Q,\Psi}(\psi_{0}%
\,|\,x)\,|\,x\right)
\end{align*}
and for all $\epsilon<0,$%
\begin{align*}
&  \Pi_{\Psi}(RB_{\epsilon,\Psi}(\psi\,|\,x)\leq RB_{\epsilon,\Psi}(\psi
_{0}\,|\,x)\,|\,x)\\
&  \geq\Pi_{\Psi}\left(  RB_{\Psi}(\psi\,|\,x)\leq RB_{\Psi}(\psi
_{0}\,|\,x)+\frac{\epsilon_{x}}{1-\epsilon_{x}}RB_{Q,\Psi}(\psi_{0}%
\,|\,x)\,|\,x\right)  .
\end{align*}
When $RB_{\Psi}(\cdot\,|\,x)$ has a discrete distribution with support
containing no limit points, then the lower and upper bounds equal $\Pi_{\Psi
}(RB_{\Psi}(\psi\,|\,x)\leq RB_{\Psi}(\psi_{0}\,|\,x)\,|\,x)$ for all
$\epsilon$ small enough and the result follows. When $RB_{\Psi}(\cdot\,|\,x)$
has a continuous distribution with density $g(\cdot\,|\,x),$ then%
\begin{align*}
&  \lim_{\epsilon\rightarrow0}\frac{\Pi_{\Psi}(RB_{\epsilon,\Psi}(\psi
_{0}\,|\,x)\leq RB_{\epsilon,\Psi}(\psi_{0}\,|\,x)\,|\,x)-\Pi_{\Psi}(RB_{\Psi
}(\psi\,|\,x)\leq RB_{\Psi}(\psi_{0}\,|\,x)\,|\,x)}{\epsilon}\\
&  =\{m_{Q}\left(  x\right)  /m(x)\}RB_{Q,\Psi}(\psi_{0}\,|\,x)g(RB_{\Psi
}(\psi_{0}\,|\,x)\,|\,x).
\end{align*}

\end{proof}

\noindent From this it is seen that in the discrete case the strength is
insensitive to local changes in the prior.

Consider the continuous case. When there is strong evidence either for or
against $\psi_{0},$ then $RB_{\Psi}(\psi_{0}\,|\,x)$ will be in the right or
left tail correspondingly of the posterior distribution of $RB_{\Psi}%
(\cdot\,|\,x)$ and so $g(RB_{\Psi}(\psi_{0}\,|\,x)\,|\,x)$ will tend to be
small. As such the strength will be robust to small changes in the prior
provided $m_{Q}\left(  x\right)  /m(x)$ is not large. When there is not strong
evidence however, then $g(RB_{\Psi}(\psi_{0}\,|\,x)\,|\,x)$ could be large
and, if $m_{Q}\left(  x\right)  /m(x)$ is not small, then the strength is not
robust. This underscores a recommendation in Baskurt and Evans (2013) that in
the continuous case the parameter be discretized when assessing the evidence
and its strength. For this, when $\psi$ is real-valued, let $\delta>0$ be the
difference between two $\psi$ values that is deemed to be of practical
importance. The prior and posterior distributions of $\psi$ discretized to the
intervals $[\psi_{0}+(2i-1)\delta/2,\psi_{0}+(2i+1)\delta/2)$ for $i\in%
\mathbb{Z}
$ are then used to assess the hypothesis corresponds to the interval
$[\psi_{0}-\delta/2,\psi_{0}+\delta/2).$ By Proposition
\ref{condpriorstrength} the strength is then insensitive to small changes in
the prior.

It is perhaps not surprising that the robustness behavior of the relative
belief ratio and its strength is more complicated when considering the effect
of the conditional prior than with the marginal prior. The optimality results
concerning robustness to the marginal prior underscore this.

\section{Robustness and Prior-Data Conflict}

The existence of a prior-data conflict means that the data support certain
values of $\psi=\Psi(\theta)$ being the true value but the prior places little
or no mass there. While various measures can be used to determine whether or
not such a conflict has occurred, a logical approach is based on the
factorization of the joint probability measure for $(\theta,x)$ given by
$\Pi\times P_{\theta}=\Pi(\cdot\,|\,T)\times M_{T}\times P(\cdot\,|\,T),$
where $T$ is a minimal sufficient statistic, $\Pi(\cdot\,|\,T)$ is the
posterior probability measure for $\theta,$ $M_{T}$ is the prior predictive
probability measure of $T$ and $P(\cdot\,|\,T)$ is the conditional probability
measure of the data given $T.$ The measure $P(\cdot\,|\,T)$ is then available
for computing probabilities relevant to checking the model $\{f_{\theta
}:\theta\in\Theta\},$ the measure $M_{T}$ is available for computing
probabilities relevant to checking the prior and $\Pi(\cdot\,|\,T)$ is the
relevant probability measure for computing probabilities for $\theta.$ A
statistical analysis then proceeds by checking the model, perhaps via a tail
probability based on a discrepancy statistic, and then proceeding to check the
prior if the data does not contradict the model. If both the model and prior
are not contradicted by the data, then we can proceed to inference about
$\theta.$ The logic behind this sequence lies in part with the fact that it
makes no sense to check a prior if the model fails. Furthermore, separating
the check of the prior from that of the model provides more information in the
event of a conflict arising, as it is then possible to identify where the
failure lies, namely, with the model or with the prior.

In Evans and Moshonov (2006) this factorization was adhered to and the tail
probability
\begin{equation}
M_{T}(m_{T}(t)\leq m_{T}(T(x))) \label{priorcon1}%
\end{equation}
was advocated for checking the prior where $m_{T}$ is the density of $M_{T}$
with respect to some support measure. So if (\ref{priorcon1}) is small, then
the observed value $T(x)$ of the minimal sufficient statistic lies in the
tails of $M_{T}$ and there is an indication of a prior-data conflict. In Evans
and Jang (2011a) the validity of this approach was firmly established by the
proof that (\ref{priorcon1}) converges to $\Pi(\pi(\theta)\leq\pi
(\theta_{true}))$ under i.i.d. sampling and some additional weak conditions.
Furthermore, it was shown how to modify (\ref{priorcon1}) so as to achieve
invariance under choice of the minimal sufficient statistic. Also, Evans and
Moshonov (2006) argued that (\ref{priorcon1}) should be replaced by
$M_{T}(m_{T}(t)\leq m_{T}(T(x))\,|\,U(T(x)))$ for any maximal ancillary $U(T)$
as the variation in $T$ due to to $U(T)$ has nothing to do with $\theta$ and
so reflects nothing about the prior. The tail probability (\ref{priorcon1}) is
a check on the full prior and Evans and Moshonov (2006) also developed methods
for checking factors of the prior so a failure in the prior could be isolated
to a particular aspect.

First, however, consider the case when $\Psi(\theta)=\theta$ and interest is
in the robustness of inferences to the whole prior. From the results in
Section \ref{margprior}, it is seen that the ratio $m_{Q}(x)/m(x)=m_{Q,T}%
(T(x))/m_{T}(T(x)),$ where $m_{Q,T}(T(x))=\int_{\Theta}f_{\theta
,T}(T(x))\,Q(d\theta),$ plays a key role in determining the local sensitivity
in the direction given by $Q,$ of the inferences for given observed data $x.$
This depends on $Q$ and the worst case is given by
\begin{equation}
\sup_{Q}\frac{m_{Q,T}(T(x))}{m_{T}(T(x))}=\sup_{Q}\frac{\int_{\Theta}%
f_{\theta,T}(T(x))\,Q(d\theta)}{m_{T}(T(x))}=RB(\theta(x)\,|\,x)
\label{priorcon2}%
\end{equation}
and note that $\theta(x)$ is the MLE in this case as well as the relative
belief estimate. Notice that when (\ref{priorcon1}) is small, so there is an
indication of a prior-data conflict existing, then $m_{T}(T(x))$ is relatively
small when compared to other values of $m_{T}(t)$ which are not influenced by
the data. This implies that the prior is having a big influence relative to
the data and so a lack of robustness can be expected.

This phenomenon is well-illustrated in the following examples where
ancillaries play no role because of Basu's theorem.\smallskip

\noindent\textbf{Example 1.} \textit{Location normal model.}

Suppose that $x=(x_{1},\ldots,x_{n})$ is a sample from the $N(\mu,1)$
distribution with $\mu\sim N(\mu_{0},\sigma_{0}^{2}).$ Then $M_{T}$ is given
by $T(x)=\bar{x}\sim N(\mu_{0},1/n+\sigma_{0}^{2}).$ When $Q$ is the
$N(\mu_{1},\sigma_{1}^{2})$ distribution, then $M_{Q,T}$ is given by $\bar
{x}\sim N(\mu_{1},1/n+\sigma_{1}^{2}).$ This implies that
\begin{equation}
\frac{m_{Q,T}(T(x))}{m_{T}(T(x))}=\sqrt{\frac{1/n+\sigma_{0}^{2}}%
{1/n+\sigma_{1}^{2}}}\exp\left\{  -\frac{1}{2}\left[
\begin{array}
[c]{c}%
\left(  1/n+\sigma_{1}^{2}\right)  ^{-1}\left(  \bar{x}-\mu_{1}\right)
^{2}-\\
\left(  1/n+\sigma_{0}^{2}\right)  ^{-1}\left(  \bar{x}-\mu_{0}\right)  ^{2}%
\end{array}
\right]  \right\}  \label{ratiolocnorm}%
\end{equation}
and, as a function of $(\mu_{1},\sigma_{1}^{2})$ this is maximized when
$\mu_{1}=\bar{x},\sigma_{1}^{2}=0.$ Notice that this supremum converges to
$\infty$ as $\bar{x}\rightarrow\pm\infty$ and such values correspond to
prior-data conflict with respect to the $N(\mu_{0},\sigma_{0}^{2})$ prior.

Now, consider a numerical example. A sample of size $n=20$ was generated from
the $N(0,1)$ distribution obtaining $\bar{x}=0.2591.$ When the base prior is
$N(0.5,1)$ then (\ref{priorcon1}) equals $0.8141$ and accordingly there is no
indication of any prior-data conflict. Also, $\sup_{Q}\left(  m_{Q}%
(x)/m(x)\right)  =4.7109$ which seems modest as it describes the worst case
robustness behavior. In Table 1 some values of $m_{Q}(x)/m(x)$ are recorded
when $Q$ is a $N(\mu_{1},\sigma_{1}^{2})$ distribution for various values of
$\mu_{1}$ and $\sigma_{1}^{2}$ as these might be expected to be realistic
directions in which to perturb the base prior. In all cases the value of
$m_{Q}(x)/m(x)$ is quite modest and the maximum value of (\ref{ratiolocnorm})
is $1.0534.$ Overall it can be concluded here that the analysis is robust to
local perturbations of the prior.%

\begin{table}[tbp] \centering
\begin{tabular}
[c]{|r|r|r|r|r|r|}\hline
$\mu_{1}$ & $\sigma_{1}^{2}$ & $m_{Q}(x)/m(x)$ & $\mu_{1}$ & $\sigma_{1}^{2}$
& $m_{Q}(x)/m(x)$\\\hline
$-3.0$ & $1$ & $0.0065$ & $0.5$ & $0.5$ & $1.3474$\\\hline
$-2.0$ & $1$ & $0.0905$ & $0.5$ & $1.0$ & $1.0000$\\\hline
$-1.0$ & $1$ & $0.4832$ & $0.5$ & $2.0$ & $0.7254$\\\hline
$1.0$ & $1$ & $0.7917$ & $0.5$ & $3.0$ & $0.5975$\\\hline
$2.0$ & $1$ & $0.2428$ & $0.5$ & $50.0$ & $0.1488$\\\hline
$3.0$ & $1$ & $0.0287$ & $0.5$ & $100.0$ & $0.1053$\\\hline
\end{tabular}
\caption{The ratio $m_Q(x)/m(x)$ in Example 1 when there is no conflict.}%
\end{table}%

Now consider an example where there is prior-data conflict. In this case a
sample of $n=20$ is generated from a $N(4,1)$ distribution obtaining $\bar
{x}=4.0867$ and the same base prior is used. The value of (\ref{priorcon1}) is
$0.0005$ and so there is a strong indication of prior-data conflict.
Furthermore, $\sup_{Q}\left(  m_{Q}(x)/m(x)\right)  =2096.85$ which certainly
indicates a lack of robustness. In Table 2 some values of $m_{Q}(x)/m(x)$ are
recorded when $Q$ is a $N(\mu_{1},\sigma_{1}^{2})$ distribution for various
values of $\mu_{1}$ and $\sigma_{1}^{2}$. It is seen that the value of
$m_{Q}(x)/m(x)$ can be relatively large and the maximum value of
(\ref{ratiolocnorm}) is $468.86.$ So it can be concluded that the analysis
based on the model, prior and observed data, will not be robust to local
perturbations of the prior when there is prior-data conflict. $\blacksquare
$\smallskip%

\begin{table}[tbp] \centering
\begin{tabular}
[c]{|r|r|r|r|r|r|}\hline
$\mu_{1}$ & $\sigma_{1}^{2}$ & $m_{Q}(x)/m(x)$ & $\mu_{1}$ & $\sigma_{1}^{2}$
& $m_{Q}(x)/m(x)$\\\hline
$-3.0$ & $1$ & $1.88\times10^{-8}$ & $0.5$ & $0.5$ & $0.0053$\\\hline
$-2.0$ & $1$ & $9.97\times10^{-8}$ & $0.5$ & $1.0$ & $1.0000$\\\hline
$-1.0$ & $1$ & $2.00\times10^{-4}$ & $0.5$ & $2.0$ & $14.2070$\\\hline
$1.0$ & $1$ & $4.90\times10^{-1}$ & $0.5$ & $3.0$ & $32.5842$\\\hline
$2.0$ & $1$ & $5.75\times10^{1}$ & $0.5$ & $50.0$ & $58.2823$\\\hline
$3.0$ & $1$ & $2.61\times10^{2}$ & $0.5$ & $100.0$ & $43.9565$\\\hline
\end{tabular}
\caption{The ratio $m_Q(x)/m(x)$ in Example 1 when there is conflict.}%
\end{table}%

\noindent\textbf{Example 2.} \textit{Bernoulli model.}

Suppose that $x=(x_{1},\ldots,x_{n})$ is a sample from a Bernoulli$(\theta)$
and the prior is $\theta\sim\,$beta$(\alpha_{0},\beta_{0})$ for some choice of
$(\alpha_{0},\beta_{0}).$ A minimal sufficient statistic is $T(x)=\sum
_{i=1}^{n}x_{i}\sim\hbox{Binomial}(n,\theta)$ and then
\[
m_{T}\left(  t\right)  =\binom{n}{t}\frac{\Gamma(\alpha_{0}+\beta_{0})}%
{\Gamma(\alpha_{0})\Gamma(\beta_{0})}\frac{\Gamma(t+\alpha_{0})\Gamma
(n-t+\beta_{0})}{\Gamma(n+\alpha_{0}+\beta_{0})}.
\]
Also,
\[
\sup_{Q}\left(  m_{Q}(x)/m(x)\right)  =\frac{\Gamma(\alpha_{0})\Gamma
(\beta_{0})}{\Gamma(\alpha_{0}+\beta_{0})}\frac{\Gamma(n+\alpha_{0}+\beta
_{0})}{\Gamma(t+\alpha_{0})\Gamma(n-t+\beta_{0})}\bar{x}^{n\bar{x}}(1-\bar
{x})^{n(1-\bar{x})}.
\]

To illustrate the relationship between prior-data conflict and robustness,
consider a numerical example. Suppose that $\alpha_{0}=5\ $and $\beta_{0}=20.$
Generating a sample of size $n=20$ from the Bernoulli$(0.25)$ gave the value
$n\bar{x}=3.$ In this case (\ref{priorcon1}) equals $0.7100$ and there is no
indication of any prior-data conflict. Also, $\sup_{Q}\left(  m_{Q}%
(x)/m(x)\right)  =1.4211$ which indicates that the inferences will be
generally robust to small deviations. If $m_{Q}(x)/m(x)$ is computed for
various $Q,$ where $Q$ is a beta$(\alpha_{1},\beta_{1}),$ then in all cases it
is readily seen that this ratio is quite reasonable in value as indeed it is
bounded above by $1.4211.$

A sample of $n=20$ was also generated from a Bernoulli$(0.9)$ with the same
prior being used. In this case $n\bar{x}=17$ and (\ref{priorcon1}) equals
$6.2\times10^{-6},$ so there is a strong indication of prior-data conflict.
Also, $\sup_{Q}\left(  m_{Q}(x)/m(x)\right)  =46396.43$ which indicates that
the inferences will be generally not be robust to small deviations. Table 3
provides some values of $m_{Q}(x)/m(x)$ for $Q$ given by a beta$(\alpha
_{1},\beta_{1})$ for various choices of $(\alpha_{1},\beta_{1})$ and there are
several large values. $\blacksquare$\smallskip%

\begin{table}[tbp] \centering
\begin{tabular}
[c]{|r|r|r|r|r|r|}\hline
$\alpha_{1}$ & $\beta_{1}$ & $m_{Q}(x)/m(x)$ & $\alpha_{1}$ & $\beta_{1}$ &
$m_{Q}(x)/m(x)$\\\hline
$20$ & $5$ & $32647.89$ & $5$ & $1$ & $21523.28$\\\hline
$15$ & $5$ & $25729.50$ & $5$ & $25$ & $0.12$\\\hline
$10$ & $5$ & $15010.95$ & $5$ & $22$ & $0.41$\\\hline
$5$ & $5$ & $3996.37$ & $5$ & $20$ & $1.00$\\\hline
$1$ & $5$ & $125.87$ & $5$ & $16$ & $6.77$\\\hline
\end{tabular}
\caption{The ratio $m_Q(x)/m(x)$ in Example 2 when there is conflict.}%
\end{table}%

Now consider the case when $\theta=(\theta_{1},\theta_{2})\in\Theta_{1}%
\times\Theta_{2}$ so the prior factors as $\pi(\theta)=\pi_{2}(\theta
_{2}\,|\,\theta_{1})\pi_{1}(\theta_{1}).$ Presumably the conditional prior
$\pi_{2}(\cdot\,|\,\theta_{1})$ and the marginal prior\ $\pi_{1}$ are elicited
and the goal is inference about some $\psi=\Psi(\theta).$ It is then
preferable to check the prior by checking each individual component for
prior-data conflict as this leads to more information about where a conflict
exists when it does.

In general, it is not clear how to check the individual components but in
certain contexts a particular structure holds that allows for this. Suppose
that all ancillaries are independent of the minimal sufficient statistic and
so can be ignored. The more general situation is covered in Evans and Moshonov (2006).

As discussed in Evans and Moshonov (2006), suppose there is a statistic $V(T)$
such that the marginal distribution of $V(T)$ is dependent only on $\theta
_{1}.$ Such a statistic is referred to as being ancillary for $\theta_{2}$
given $\theta_{1}.$ Naturally we want $V(T)$ to be a maximal ancillary for
$\theta_{2}$ given $\theta_{1}.$ An appropriate tail probability for checking
$\pi_{1}$ is then given by%
\begin{equation}
M_{V(T)}(m_{V(T)}(v)\leq m_{V(T)}(V(T(x)))), \label{priorcon4}%
\end{equation}
as $M_{V(T)}$ does not depend on $\pi_{2}(\cdot\,|\,\theta_{1}).$ A natural
order is to check $\pi_{1}$ first and then check $\pi_{2}(\cdot\,|\,\theta
_{1})$ for prior-data conflict, whenever no prior-data conflict is found for
$\pi_{1}.$ The appropriate tail probability for checking $\pi_{2}%
(\cdot\,|\,\theta_{1})$ is given by
\begin{equation}
M_{T}(m_{T}(t\,|\,V(T(x)))\leq m_{T}(T(x)\,|\,V(T(x)))\,|\,V(T(x))).
\label{priorcon3}%
\end{equation}
Note that this is assessing whether or not $\pi_{2}(\cdot\,|\,\theta_{1})$ is
a suitable prior for $\theta_{2}$ among those $\theta_{1}$ values deemed to be
suitable according to the prior $\pi_{1}.$ If (\ref{priorcon3}) were to be
used before (\ref{priorcon4}), then it would not be possible to assess if a
failure was due to where $\pi_{1}$ was placing the bulk of its mass or was
caused by where the conditional priors were placing their mass. Notice that%
\begin{equation}
\frac{m_{Q,T}(T(x))}{m_{T}(T(x))}=\frac{m_{Q,T}(T(x)\,|\,V(T(x)))}%
{m_{T}(T(x)\,|\,V(T(x)))}\frac{m_{Q,V(T)}(V(T(x)))}{m_{V(T)}(V(T(x)))},
\label{priorcon5}%
\end{equation}
so prior-data conflict with either $\pi_{1}$ or $\pi_{2}(\cdot\,|\,\theta
_{1})$ could lead to large values of the ratio on the left for certain choices
of $Q.$ When only the conditional prior of $\theta_{2}$ given $\theta_{1}$ is
perturbed, then $m_{Q,V(T)}(V(T(x)))=$ $m_{V(T)}(V(T(x))).$

Letting $f_{\theta_{1},V}$ denote the density of $V,$ then%
\begin{align*}
&  \frac{m_{Q,V(T)}(V(T(x)))}{m_{V(T)}(V(T(x)))}\\
&  =\int_{\Theta_{1}}\frac{f_{\theta_{1},V}(V(T(x)))}{m_{V(T)}(V(T(x)))}%
\,Q_{1}(d\theta_{1})\leq RB_{1}(\theta_{1}(V(T(x)))\,|\,V(T(x)))
\end{align*}
where $RB_{1}(\cdot\,|\,V(T(x)))$ gives the relative belief ratios for
$\theta_{1}$ based on having observed $V(T(x)).$ The right-hand side gives the
worst-case behavior of the second factor in (\ref{priorcon5}).

Now consider the robustness of relative belief inferences for a general
$\psi=\Psi(\theta).$ The following result generalizes Propositions
\ref{relbelmarg} and \ref{relbelcond} as we consider a general perturbation to
the prior, namely, $\Pi_{\epsilon}=(1-\epsilon)\Pi+\epsilon Q$ and the proof
is the same as that of Proposition \ref{relbelcond}.

\begin{proposition}
The G\^{a}teaux derivative of $RB_{\Psi}(\cdot\,|\,x)$ at $\psi$ in the
direction $Q$ is $\{m_{Q}(x)/m\left(  x\right)  \}(RB_{Q,\Psi}(\psi
\,|\,x)-RB_{\Psi}(\psi\,|\,x)).$
\end{proposition}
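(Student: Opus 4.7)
The plan is to follow the template of the proof of Proposition \ref{relbelcond} essentially verbatim, adapting only the expression for the mixed predictive and the mixed marginal prior on $\psi$. The key observation is that the mapping $\Pi\mapsto RB_{\Psi}(\psi\,|\,x)$ is built out of ratios of the predictive density $m(x)$ and the conditional predictive $m(x\,|\,\psi)$, both of which are linear in $\Pi$. Linearity is what lets the contaminated relative belief ratio be written as a convex combination of the two relative belief ratios $RB_{\Psi}(\psi\,|\,x)$ and $RB_{Q,\Psi}(\psi\,|\,x)$, after which the derivative is immediate.

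Concretely, I would first compute $m_{\epsilon}(x)=(1-\epsilon)m(x)+\epsilon m_{Q}(x)$ and the marginal $\psi$-prior $\pi_{\epsilon,\Psi}=(1-\epsilon)\pi_{\Psi}+\epsilon q_{\Psi}$, where $q_{\Psi}$ is the $\psi$-marginal of $Q$. Using Savage--Dickey, $RB_{\epsilon,\Psi}(\psi\,|\,x)=m_{\epsilon}(x\,|\,\psi)/m_{\epsilon}(x)$, and since $m_{\epsilon}(x\,|\,\psi)$ is also a convex combination of $m(x\,|\,\psi)$ and $m_{Q}(x\,|\,\psi)$ (weighted by the marginal prior on $\psi$), a short algebraic manipulation yields
\[
RB_{\epsilon,\Psi}(\psi\,|\,x)=(1-\epsilon_{x})RB_{\Psi}(\psi\,|\,x)+\epsilon_{x}RB_{Q,\Psi}(\psi\,|\,x),
\]
with $\epsilon_{x}=\epsilon m_{Q}(x)/m_{\epsilon}(x)$. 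This is exactly the structural identity used in Proposition \ref{relbelcond}.

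Second, I would subtract $RB_{\Psi}(\psi\,|\,x)$ from both sides to get
\[
RB_{\epsilon,\Psi}(\psi\,|\,x)-RB_{\Psi}(\psi\,|\,x)=\epsilon_{x}\bigl(RB_{Q,\Psi}(\psi\,|\,x)-RB_{\Psi}(\psi\,|\,x)\bigr),
\]
divide by $\epsilon$, and take the limit $\epsilon\rightarrow0$. Because $\epsilon_{x}/\epsilon=m_{Q}(x)/m_{\epsilon}(x)\rightarrow m_{Q}(x)/m(x)$, the G\^{a}teaux derivative equals $\{m_{Q}(x)/m(x)\}\bigl(RB_{Q,\Psi}(\psi\,|\,x)-RB_{\Psi}(\psi\,|\,x)\bigr)$, as claimed.

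The only step where anything can go wrong is the bookkeeping in the first step: one must verify that the mixture structure for the $\psi$-marginal posterior passes cleanly through the Savage--Dickey ratio without assuming the restricted factorizations of Sections 4.1 and 4.2. This is a pure linearity argument, so it is straightforward once $m_{\epsilon}(x)$ and $\pi_{\epsilon,\Psi}$ are written out, and it is really the main (albeit minor) obstacle. After that, the limit computation is trivial because $\epsilon_{x}$ is smooth in $\epsilon$ at $0$ and $RB_{Q,\Psi}(\psi\,|\,x)$ does not depend on $\epsilon$.
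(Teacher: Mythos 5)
Your route is the one the paper itself takes (the paper offers no separate argument, saying only that the proof is the same as that of Proposition \ref{relbelcond}), but the step you single out as the crux and then dismiss as ``a pure linearity argument'' is exactly where the argument does not go through. Linearity gives two mixture representations with \emph{different} weights: the posterior marginal of $\psi$ mixes with the data-dependent weights, $\pi_{\epsilon,\Psi}(\psi\,|\,x)=(1-\epsilon_{x})\pi_{\Psi}(\psi\,|\,x)+\epsilon_{x}q_{\Psi}(\psi\,|\,x)$ with $\epsilon_{x}=\epsilon m_{Q}(x)/m_{\epsilon}(x)$, while the prior marginal mixes with the original weights, $\pi_{\epsilon,\Psi}(\psi)=(1-\epsilon)\pi_{\Psi}(\psi)+\epsilon q_{\Psi}(\psi)$. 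The relative belief ratio is the quotient of these, and a quotient of two differently weighted mixtures is not the $(1-\epsilon_{x},\epsilon_{x})$-mixture of the quotients. Equivalently, $m_{\epsilon}(x\,|\,\psi)$ is a convex combination of $m(x\,|\,\psi)$ and $m_{Q}(x\,|\,\psi)$ with weights proportional to $(1-\epsilon)\pi_{\Psi}(\psi)$ and $\epsilon q_{\Psi}(\psi)$, not to $(1-\epsilon)m(x)$ and $\epsilon m_{Q}(x)$, so $m_{\epsilon}(x\,|\,\psi)/m_{\epsilon}(x)$ does not reduce to $(1-\epsilon_{x})RB_{\Psi}(\psi\,|\,x)+\epsilon_{x}RB_{Q,\Psi}(\psi\,|\,x)$ for a general $Q$ on $\Theta$. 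That identity holds in the two restricted settings of Sections 4.1 and 4.2 precisely because there either the conditional predictive or the $\psi$-marginal is held fixed.

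Differentiating the quotient directly gives
\begin{equation*}
\lim_{\epsilon\rightarrow 0}\frac{RB_{\epsilon,\Psi}(\psi\,|\,x)-RB_{\Psi}(\psi\,|\,x)}{\epsilon}
=\frac{m_{Q}(x)}{m(x)}\left(RB_{Q,\Psi}(\psi\,|\,x)-RB_{\Psi}(\psi\,|\,x)\right)
+\left(\frac{q_{\Psi}(\psi)}{\pi_{\Psi}(\psi)}-1\right)\frac{m_{Q}(x\,|\,\psi)-m(x\,|\,\psi)}{m(x)},
\end{equation*}
so there is an extra term beyond the stated formula. It vanishes when $q_{\Psi}(\psi)=\pi_{\Psi}(\psi)$ (covering the case $\Psi(\theta)=\theta$ and the case where $Q$ and $\Pi$ share the marginal for $\psi$, which are the situations the paper actually uses downstream) or when $m_{Q}(x\,|\,\psi)=m(x\,|\,\psi)$ (the Section 4.1 setting), but not in general: a four-point $\Theta$ with a two-valued $\Psi$ and a $Q$ that reweights both the marginal and the conditional already produces a discrepancy. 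So your proof as written does not establish the proposition at the stated level of generality; to make your first step honest you must either add the hypothesis that $Q$ preserves the $\psi$-marginal (or the conditional predictive), or carry the extra term.
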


\noindent The factor $RB_{Q,\Psi}(\psi\,|\,x)-RB_{\Psi}(\psi\,|\,x)$ can be
big simply because we choose a prior $Q$ that is very different than $\Pi.$
For example, $RB_{\Psi}(\psi\,|\,x)$ may be big (small) because there is
considerable evidence in favor of (against) $\psi$ being the true value and we
can choose a prior $Q$ that doesn't (does) place mass near $\psi.$ As such, it
makes sense to standardize the derivative by dividing by this factor and this
leaves the robustness determined again by $m_{Q}(x)/m\left(  x\right)  .$

Suppose now that $Q$ and $\Pi$ have the same marginal for $\varsigma
=\Xi(\theta).$ Then, $m_{Q}(x)=m\left(  x\right)  \int_{\Xi}\int_{\Xi
^{-1}\{\tau\}}RB(\theta\,|\,x)\,Q(d\theta\,|\,\psi)\,\Pi_{\Xi}(d\varsigma)\leq
m\left(  x\right)  \int_{\Xi}RB(\theta_{\varsigma}(x)\,|\,x)\,$\newline%
$\Pi_{\Xi}(d\varsigma)$ where $\theta_{\varsigma}(x)=\arg\sup\{RB(\theta
)\,|\,x):\Xi(\theta)=\varsigma\}.$ Therefore,
\begin{equation}
\frac{m_{Q}(x)}{m\left(  x\right)  }\leq\int_{\Xi}RB(\theta_{\varsigma
}(x)\,|\,x)\,\Pi_{\Xi}(d\varsigma) \label{priorcon6}%
\end{equation}
and the the right-hand side gives the worst-case behavior of the first factor
in (\ref{priorcon5}) when $\Xi(\theta)=\theta_{1}$ which is related to
prior-data conflict with the prior on $\theta_{2}.$

The following is a standard example where priors are specified
hierarchically.\smallskip

\noindent\textbf{Example 3} \textit{Location-scale normal model.}

Suppose that $x=(x_{1},\ldots,x_{n})$ is a sample from the $N(\mu,\sigma^{2})$
distribution with $\mu\,|\,\sigma^{2}\sim N(\mu_{0},\tau_{0}^{2}\sigma
^{2}),\sigma^{-2}\sim\,$gamma$_{rate}(\alpha_{0},\beta_{0}).$ Then
$T(x)=(\bar{x},||x-\bar{x}1||^{2})$ is a minimal sufficient statistic for the
model. Note that the prior is chosen by eliciting values for $\mu_{0},\tau
_{0}^{2},\alpha_{0},\beta_{0}$ and so there is interest in how sensitive
inferences are to perturbations in each component separately. The posterior
distribution of $(\mu,\sigma^{2})$ is given by $\mu\,|\,\sigma^{2},T(x)\sim
N(\mu_{x},\left(  n+1/\tau_{0}^{2}\right)  ^{-1}\sigma^{2}),\sigma
^{-2}\,|\,T(x)\sim\,$gamma$_{rate}\left(  \alpha_{0}+n/2,\beta(\bar{x}%
,s^{2})\right)  $ where $\mu_{x}=(n+1/\tau_{0}^{2})^{-1}(n\bar{x}+\mu_{0}%
/\tau_{0}^{2})$ and $\beta(\bar{x},s^{2})$\newline$=\beta_{0}+(n-1)s^{2}%
/2+n(\bar{x}-\mu_{0})^{2}/2(n\tau_{0}^{2}+1)$ with $s^{2}=||x-\bar{x}%
1||^{2}/(n-1).$

Consider first inferences for $\psi=\Psi(\theta)=\sigma^{2}$ and note that
$V(T(x))=||x-\bar{x}1||^{2}$ is ancillary given $\psi$ and its distribution
depends on $\psi.$ Therefore, the prior on $\sigma^{2}$ is checked first using
the prior predictive for $V(T(x)).$ An easy calculation gives that the prior
distribution of $s^{2}=V(T(x))/(n-1)$ is $(\beta_{0}/\alpha_{0})F(n-1,2\alpha
_{0})$ and this specifies (\ref{priorcon4}). While the results of Section 4.1
apply here, consider the behavior of the relative belief ratio $RB_{1}%
(\sigma^{2}\,|\,V(T(x)))$ which is based on only observing $V(T(x))$ rather
than $T(x).$ By Proposition \ref{relbelmarg} this has G\^{a}teaux derivative
depending on $m_{Q,V(T)}(V(T(x)))/m_{V(T)}(V(T(x))).$ Notice, however, that
relative belief ratios accumulate evidence in a simple way. For any statistic
$V(T(x)),$ then
\[
RB_{\Psi}(\psi\,|\,T(x))=\frac{\pi_{\Psi}(\psi\,|\,T(x))}{\pi_{\Psi}(\psi
)}=\frac{\pi_{\Psi}(\psi\,|\,V(T(x)))}{\pi_{\Psi}(\psi)}\frac{\pi_{\Psi}%
(\psi\,|\,T(x))}{\pi_{\Psi}(\psi\,|\,V(T(x)))}%
\]
where the first factor gives the evidence obtained after observing $V(T(x))$
and the second factor gives the evidence obtained after observing $T(x)$
having already observed $V(T(x)).$ So $RB_{1}(\sigma^{2}\,|\,x)=RB_{1}%
(\sigma^{2}\,|\,V(T(x)))[RB_{1}(\sigma^{2}\,|\,T(x))/$\newline$RB_{1}%
(\sigma^{2}\,|\,V(T(x)))]$ with the same interpretation for the factors. As
such, a lack of robustness of $RB_{1}(\sigma^{2}\,|\,V(T(x))),$ which can be
connected to prior-data conflict through (\ref{priorcon4}), implies a lack of
robustness for $RB_{1}(\sigma^{2}\,|\,x).$

When no prior-data conflict is obtained for the prior on $\sigma^{2},$ then it
makes sense to look for prior-data conflict with the prior on $\mu$ which is
typically the parameter of primary interest. So now consider perturbations to
the prior on $\mu$ and the relationship to prior-data conflict with this
prior. The conditional distribution of $T(x)$ given $V(T(x))$ is given by the
conditional prior predictive of $\bar{x}$ given $s^{2}$ which is distributed
as $\mu_{0}+\tilde{\sigma}t_{n+2\alpha_{0}-1}$ where $\tilde{\sigma}%
^{2}=\left\{  \tau_{0}^{2}\left(  n\tau_{0}^{2}+1\right)  \left(  2\beta
_{0}+(n-1)s^{2}\right)  +1\right\}  /\{n\tau_{0}^{2}\left(  n+2\alpha
_{0}-1\right)  \}$ specifying (\ref{priorcon3}). Furthermore, for
(\ref{priorcon6}), $\Xi(\theta)=\sigma^{2}$ and $\theta_{\sigma^{2}}%
(x)=(\bar{x},\sigma^{2})$ with%
\begin{align*}
&  RB((\bar{x},\sigma^{2})\,|\,x)\\
&  =\frac{(n\tau_{0}^{2}+1)^{\frac{1}{2}}}{\beta_{0}^{\alpha_{0}}}\frac
{\Gamma(\alpha_{0})}{\Gamma(\alpha_{0}+n/2)}(\beta(\bar{x},s^{2}))^{\alpha
_{0}+\frac{n}{2}}\left(  \frac{1}{\sigma^{2}}\right)  ^{\frac{n}{2}}%
\exp\left\{  -\frac{(n-1)s^{2}}{2\sigma^{2}}\right\}
\end{align*}
and so%
\[
\int_{0}^{\infty}RB((\bar{x},\sigma^{2})\,|\,x)\,\Pi_{1}(d\sigma^{-2}%
)=(n\tau_{0}^{2}+1)^{\frac{1}{2}}\left(  \frac{\beta(\bar{x},s^{2})}{\beta
_{0}+(n-1)s^{2}/2}\right)  ^{\alpha_{0}+\frac{n}{2}}.
\]

Now consider a number of numerical examples where the base prior is always
specified by $\mu_{0}=0,\tau_{0}^{2}=1,\alpha_{0}=5$ and $\beta_{0}=5.$ The
behavior of the two factors in (\ref{priorcon5}) is examined when there is no
prior-data conflict and when there is.

A sample of size $n=20$ was generated from the $N(0,1)$ distribution obtaining
$\bar{x}=-0.1066,s^{2}=0.9087.$ So there should be no prior-data conflict with
the prior on $\sigma^{2}.$ Indeed, (\ref{priorcon4}) equals $0.7626$ so there
is no indication of any problems with the prior on $\sigma^{2}.$ Values of
$m_{Q,V(T)}(V(T(x)))/m_{V(T)}(V(T(x)))$ are recorded in Table 4 when the
marginal prior on $\sigma^{2}$ is perturbed by a gamma$_{rate}(\alpha
_{1},\beta_{1})$ distribution for various values of $\alpha_{1}$ and
$\beta_{1}$. In all cases, the ratio is small and indicates robustness to
local perturbations of the prior on $\sigma^{2}.$ Note that the worst case
behavior, over all possible directions, is given by the maximized relative
belief ratio for $\sigma^{2}$ based on $V(T(x))$ which occurs at $\sigma
^{2}=s^{2}$ and equals%
\begin{align*}
&  RB_{1}(s^{2}\,|\,V(T(x)))\\
&  =\frac{\Gamma(\alpha_{0})}{\Gamma(\alpha_{0}+(n-1)/2)}\beta_{0}%
^{-\alpha_{0}}e^{-\frac{n-1}{2}}(s^{2})^{-\frac{n-1}{2}}\left(  \frac
{(n-1)s^{2}}{2}+\beta_{0}\right)  ^{\frac{n-1}{2}+\alpha_{0}}.
\end{align*}
In this case $RB_{1}(s^{2}\,|\,V(T(x)))=1.7479.$%

\begin{table}[tbp] \centering
\begin{tabular}
[c]{|c|c|c|c|c|c|}\hline
$\alpha_{1}$ & $\beta_{1}$ & $\frac{m_{Q,V(T)}(V(T(x)))}{m_{V(T)}(V(T(x)))}$ &
$\alpha_{1}$ & $\beta_{1}$ & $\frac{m_{Q,V(T)}(V(T(x)))}{m_{V(T)}(V(T(x)))}%
$\\\hline
$5$ & $1$ & $0.05$ & $1$ & $5$ & $0.07$\\\hline
$5$ & $2$ & $0.38$ & $2$ & $5$ & $0.25$\\\hline
$5$ & $4$ & $0.99$ & $4$ & $5$ & $0.81$\\\hline
$5$ & $10$ & $0.34$ & $10$ & $5$ & $0.53$\\\hline
\end{tabular}
\caption{The ratio $m_{Q,V(T)}(V(T(x)))/m_{V(T)}(V(T(x)))$ in Example 3 when there is no conflict with the prior on $\sigma^2$.}%
\end{table}%

Next a sample of size $n=20$ from the $N(0,25)$ was generated obtaining
$\bar{x}=0.0950,s^{2}=23.9593.$ So there is clearly prior-data conflict with
the prior on $\sigma^{2}.$ This is reflected in the value of (\ref{priorcon4})
which equals $0.64\times10^{-5}.$ Table 5 shows that there is a serious lack
of robustness. The worst case behavior is given by $RB_{1}(s^{2}%
\,|\,V(T(x)))=40484.68.$%

\begin{table}[tbp] \centering
\begin{tabular}
[c]{|r|r|r|r|r|r|}\hline
$\alpha_{1}$ & $\beta_{1}$ & $\frac{m_{Q,V(T)}(V(T(x)))}{m_{V(T)}(V(T(x)))}$ &
$\alpha_{1}$ & $\beta_{1}$ & $\frac{m_{Q,V(T)}(V(T(x)))}{m_{V(T)}(V(T(x)))}%
$\\\hline
\multicolumn{1}{|c|}{$5$} & \multicolumn{1}{|c|}{$1$} &
\multicolumn{1}{|c|}{$0.00$} & \multicolumn{1}{|c|}{$1$} &
\multicolumn{1}{|c|}{$5$} & \multicolumn{1}{|c|}{$5517.42$}\\\hline
\multicolumn{1}{|c|}{$5$} & \multicolumn{1}{|c|}{$2$} &
\multicolumn{1}{|c|}{$0.01$} & \multicolumn{1}{|c|}{$2$} &
\multicolumn{1}{|c|}{$5$} & \multicolumn{1}{|c|}{$1245.26$}\\\hline
\multicolumn{1}{|c|}{$5$} & \multicolumn{1}{|c|}{$4$} &
\multicolumn{1}{|c|}{$2.34$} & \multicolumn{1}{|c|}{$4$} &
\multicolumn{1}{|c|}{$5$} & \multicolumn{1}{|c|}{$13.78$}\\\hline
\multicolumn{1}{|c|}{$5$} & \multicolumn{1}{|c|}{$10$} &
\multicolumn{1}{|c|}{$23.51$} & \multicolumn{1}{|c|}{$10$} &
\multicolumn{1}{|c|}{$5$} & \multicolumn{1}{|c|}{$0.00$}\\\hline
\end{tabular}
\caption{The ratio $m_{Q,V(T)}(V(T(x)))/m_{V(T)}(V(T(x)))$ in Example 3 when there is conflict with the prior on $\sigma^2$.}%
\end{table}%

It is also relevant to consider what happens concerning the robustness of
inferences about $\sigma^{2}$ when there is prior-data conflict with the prior
on $\mu$ but not with the prior on $\sigma^{2}.$ A sample of $n=20$ was
generated from the $N(10,1)$ distribution obtaining $\bar{x}=9.7041,s^{2}%
=1.0082,$ so there is clearly prior-data conflict with the prior on $\mu$ but
not with the prior on $\sigma^{2}.$ The value of (\ref{priorcon4}) equals
$0.6460$ which gives no reason to doubt the relevance of the prior on
$\sigma^{2}.$ Table 6 shows that $m_{Q,V(T)}(V(T(x)))/m_{V(T)}((T(x)))$ is
small and indicates robustness to local perturbations of the prior on
$\sigma^{2}.$ The worst case behavior is given by $RB_{1}(s^{2}%
\,|\,V(T(x)))=1.7218.$ This reinforces the claim that the tail probabilities
(\ref{priorcon4}) and (\ref{priorcon3}) are measuring different aspects of the
data conflicting with the prior.%

\begin{table}[tbp] \centering
\begin{tabular}
[c]{|r|r|r|r|r|r|}\hline
$\alpha_{1}$ & $\beta_{1}$ & $\frac{m_{Q,V(T)}(V(T(x))}{m_{V(T)}(V(T(x))}$ &
$\alpha_{1}$ & $\beta_{1}$ & $\frac{m_{Q,V(T)}(V(T(x))}{m_{V(T)}(V(T(x))}%
$\\\hline
\multicolumn{1}{|c|}{$5$} & \multicolumn{1}{|c|}{$1$} &
\multicolumn{1}{|c|}{$0.03$} & \multicolumn{1}{|c|}{$1$} &
\multicolumn{1}{|c|}{$5$} & \multicolumn{1}{|c|}{$0.09$}\\\hline
\multicolumn{1}{|c|}{$5$} & \multicolumn{1}{|c|}{$2$} &
\multicolumn{1}{|c|}{$0.29$} & \multicolumn{1}{|c|}{$2$} &
\multicolumn{1}{|c|}{$5$} & \multicolumn{1}{|c|}{$0.31$}\\\hline
\multicolumn{1}{|c|}{$5$} & \multicolumn{1}{|c|}{$4$} &
\multicolumn{1}{|c|}{$0.92$} & \multicolumn{1}{|c|}{$4$} &
\multicolumn{1}{|c|}{$5$} & \multicolumn{1}{|c|}{$0.86$}\\\hline
\multicolumn{1}{|c|}{$5$} & \multicolumn{1}{|c|}{$10$} &
\multicolumn{1}{|c|}{$0.44$} & \multicolumn{1}{|c|}{$10$} &
\multicolumn{1}{|c|}{$5$} & \multicolumn{1}{|c|}{$0.38$}\\\hline
\end{tabular}
\caption{The ratio $m_{Q,V(T)}(V(T(x)))/m_{V(T)}(V(T(x)))$ in Example 3 when there is conflict with the prior on $\mu$ but not with the prior on $\sigma^2$.}%
\end{table}%

Now consider perturbations to the prior on $\mu$ with the prior on $\sigma
^{2}$ fixed. A sample of $n=20$ was generated from a $N(0,1)$ obtaining
$\bar{x}=-0.1066,s^{2}=0.9087$ so there is clearly no prior-data conflict with
either component. This is reflected in the value of (\ref{priorcon3}) which
equals $0.9150.$ Table 7 shows that the first factor $m_{Q,T}%
(T(x)\,|\,V(T(x)))/m_{T}(T(x)\,|\,V(T(x)))$ in (\ref{priorcon5}) is small when
the conditional prior on $\mu$ is perturbed by $N(\mu_{1},\tau_{1}^{2})$
priors and thus demonstrates robustness to perturbations in these directions.
The worst case behavior is given by $\int_{0}^{\infty}RB((\bar{x},\sigma
^{2})\,|\,x)\,\Pi_{1}(d\sigma^{-2})=4.6099$ which is comparatively small.%

\begin{table}[tbp] \centering
\begin{tabular}
[c]{|r|r|r|r|r|r|}\hline
$\mu_{1}$ & $\tau_{1}^{2}$ & $\frac{m_{Q,T}(T(x)\,|\,V(T(x)))}{m_{T}%
(T(x)\,|\,V(T(x)))}$ & $\mu_{1}$ & $\tau_{1}^{2}$ & $\frac{m_{Q,T}%
(T(x)\,|\,V(T(x)))}{m_{T}(T(x)\,|\,V(T(x)))}$\\\hline
\multicolumn{1}{|c|}{$-2$} & \multicolumn{1}{|c|}{$1$} &
\multicolumn{1}{|c|}{$0.17$} & \multicolumn{1}{|c|}{$0$} &
\multicolumn{1}{|c|}{$2$} & \multicolumn{1}{|c|}{$0.51$}\\\hline
\multicolumn{1}{|c|}{$-1$} & \multicolumn{1}{|c|}{$1$} &
\multicolumn{1}{|c|}{$0.66$} & \multicolumn{1}{|c|}{$0$} &
\multicolumn{1}{|c|}{$3$} & \multicolumn{1}{|c|}{$0.34$}\\\hline
\multicolumn{1}{|c|}{$1$} & \multicolumn{1}{|c|}{$1$} &
\multicolumn{1}{|c|}{$0.54$} & \multicolumn{1}{|c|}{$0$} &
\multicolumn{1}{|c|}{$4$} & \multicolumn{1}{|c|}{$0.26$}\\\hline
\multicolumn{1}{|c|}{$2$} & \multicolumn{1}{|c|}{$1$} &
\multicolumn{1}{|c|}{$0.12$} & \multicolumn{1}{|c|}{$0$} &
\multicolumn{1}{|c|}{$5$} & \multicolumn{1}{|c|}{$0.21$}\\\hline
\end{tabular}
\caption{The ratio $m_{Q,T}(T(x)\,|\,V(T(x))))/m_{T}(T(x)\,|\,V(T(x)))$ in Example 3 when there is no conflict with the prior on $\sigma^2$ or with the prior on $\mu$.}%
\end{table}%

Table 8 gives some values of $m_{Q,T}(T(x)\,|\,V(T(x)))/m_{T}%
(T(x)\,|\,V(T(x)))$ when a sample of $n=20$ was generated from a $N(0,25),$
obtaining $\bar{x}=0.0950,s^{2}=23.9593.$ So in this case there is prior-data
conflict with the prior on $\sigma^{2}$ but not with the prior on $\mu.$ The
value of (\ref{priorcon3}) equals $0.9150$ which gives no indication of
prior-data conflict with the prior on $\mu.$ The tabulated values also
indicate no serious robustness concerns as does $\int_{0}^{\infty}RB((\bar
{x},\sigma^{2})\,|\,x)\,\Pi_{1}(d\sigma^{-2})=4.5838.$ This also reinforces
the claim that the tail probabilities (\ref{priorcon4}) and (\ref{priorcon3})
are measuring different aspects of the data conflicting with the prior.%

\begin{table}[tbp] \centering
\begin{tabular}
[c]{|r|r|r|r|r|r|}\hline
$\mu_{1}$ & $\tau_{1}^{2}$ & $\frac{m_{Q,T}(T(x)\,|\,V(T(x)))}{m_{T}%
(T(x)\,|\,V(T(x)))}$ & $\mu_{1}$ & $\tau_{1}^{2}$ & $\frac{m_{Q,T}%
(T(x)\,|\,V(T(x)))}{m_{T}(T(x)\,|\,V(T(x)))}$\\\hline
\multicolumn{1}{|c|}{$-2$} & \multicolumn{1}{|c|}{$1$} &
\multicolumn{1}{|c|}{$0.87$} & \multicolumn{1}{|c|}{$0$} &
\multicolumn{1}{|c|}{$2$} & \multicolumn{1}{|c|}{$0.51$}\\\hline
\multicolumn{1}{|c|}{$-1$} & \multicolumn{1}{|c|}{$1$} &
\multicolumn{1}{|c|}{$0.96$} & \multicolumn{1}{|c|}{$0$} &
\multicolumn{1}{|c|}{$3$} & \multicolumn{1}{|c|}{$0.34$}\\\hline
\multicolumn{1}{|c|}{$1$} & \multicolumn{1}{|c|}{$1$} &
\multicolumn{1}{|c|}{$0.98$} & \multicolumn{1}{|c|}{$0$} &
\multicolumn{1}{|c|}{$4$} & \multicolumn{1}{|c|}{$0.26$}\\\hline
\multicolumn{1}{|c|}{$2$} & \multicolumn{1}{|c|}{$1$} &
\multicolumn{1}{|c|}{$0.90$} & \multicolumn{1}{|c|}{$0$} &
\multicolumn{1}{|c|}{$5$} & \multicolumn{1}{|c|}{$0.21$}\\\hline
\end{tabular}
\caption{The ratio $m_{Q,T}(T(x)\,|\,V(T(x))))/m_{T}(T(x)\,|\,V(T(x)))$ in Example 3 when there is conflict with the prior on $\sigma^2$ but not with the prior on $\mu$.}%
\end{table}%

Table 9 gives some values of $m_{Q,T}(T(x)\,|\,V(T(x)))/m_{T}%
(T(x)\,|\,V(T(x)))$ when a sample of $n=20$ was generated from a $N(10,1)$
obtaining $\bar{x}=9.7941,s^{2}=1.0082.$ So in this case there is prior-data
conflict with the prior on $\mu$ but not with the prior on $\sigma^{2}.$ The
value of (\ref{priorcon3}) equals $0.1691\times10^{-9}$ which gives a clear
indication of prior-data conflict with the prior on $\mu.$ In this case the
tabulated values indicate a clear lack of robustness with respect to the prior
on $\mu.$ Also, $\int_{0}^{\infty}RB((\bar{x},\sigma^{2})\,|\,x)\,\Pi
_{1}(d\sigma^{-2})=8,046,933,962$ indicates that the worst case behavior with
respect to robustness is terrible.%

\begin{table}[tbp] \centering
\begin{tabular}
[c]{|r|r|r|r|r|r|}\hline
$\mu_{1}$ & $\tau_{1}^{2}$ & $\frac{m_{Q,T}(T(x)\,|\,V(T(x)))}{m_{T}%
(T(x)\,|\,V(T(x)))}$ & $\mu_{1}$ & $\tau_{1}^{2}$ & $\frac{m_{Q,T}%
(T(x)\,|\,V(T(x)))}{m_{T}(T(x)\,|\,V(T(x)))}$\\\hline
\multicolumn{1}{|c|}{$-2$} & \multicolumn{1}{|c|}{$1$} &
\multicolumn{1}{|c|}{$0.01$} & \multicolumn{1}{|c|}{$0$} &
\multicolumn{1}{|c|}{$2$} & \multicolumn{1}{|c|}{$117,584$}\\\hline
\multicolumn{1}{|c|}{$-1$} & \multicolumn{1}{|c|}{$1$} &
\multicolumn{1}{|c|}{$0.10$} & \multicolumn{1}{|c|}{$0$} &
\multicolumn{1}{|c|}{$3$} & \multicolumn{1}{|c|}{$5,611,980$}\\\hline
\multicolumn{1}{|c|}{$1$} & \multicolumn{1}{|c|}{$1$} &
\multicolumn{1}{|c|}{$10.83$} & \multicolumn{1}{|c|}{$0$} &
\multicolumn{1}{|c|}{$4$} & \multicolumn{1}{|c|}{$26,012,609$}\\\hline
\multicolumn{1}{|c|}{$2$} & \multicolumn{1}{|c|}{$1$} &
\multicolumn{1}{|c|}{$132.09$} & \multicolumn{1}{|c|}{$0$} &
\multicolumn{1}{|c|}{$5$} & \multicolumn{1}{|c|}{$55,478,630$}\\\hline
\end{tabular}
\caption{The ratio $m_{Q,T}(T(x)\,|\,V(T(x))))/m_{T}(T(x)\,|\,V(T(x)))$ in Example 3 when there is no conflict with the prior on $\sigma^2$ but there is with the prior on $\mu$.}%
\end{table}%

\section{Conclusions}

Several optimal robustness results have been derived here for relative belief
inferences. These and other results suggest a natural preference for these
inferences over other Bayesian inferences for estimation and hypothesis
assessment. Even though relative belief inferences may be the most robust to
choice of prior, this does not guarantee that they are robust in practice. The
issue of practical robustness in a given problem is seen to be connected with
whether or not there is prior-data conflict. With no prior-data conflict the
inferences are robust to small changes in the prior, at least in the sense
measured here. This adds support to the point-of-view that checking for
prior-data conflict is an essential aspect of good statistical practice.

It is interesting that the worst case behavior of the measure of sensitivity
is associated with the maximized value of a relative belief ratio. The actual
maximum value attained is meaningless, however, as there is no way to
calibrate this as opposed to calibrating the relative belief ratio at a fixed
value via the strength. The relative belief estimate is consistent, however,
and the relative belief ratio at this value will, at least in the continuous
case, converge to infinity. So large values would seem to be associated with
high evidence in favor. What has been shown here is that large values can be
associated with prior-data conflict and a lack of robustness rather than
providing high evidence. When prior-data conflict is encountered the prior can
be modified, following Evans and Jang (2011b), to avoid this. While objections
can be raised to taking such a step, it seems necessary if we want to report a
valid characterization of the evidence obtained.

In Baskurt and Evans (2013) the relationship between relative belief ratios
and Bayes factors is examined. Both serve as measures of evidence but the
relative belief ratio is a simpler, more direct measure and it has many nice
mathematical properties. It is the case too that often relative belief ratios
and Bayes factors agree. For example, in the case of continuous priors, when
the Bayes factor at a point is defined as a limit, then these quantities are
the same. As such, it is reasonable to expect that the results derived here
about relative belief inferences will apply equally well to inferences based
on Bayes factors.

\section{References}

\noindent Baskurt, Z. and Evans, M. (2013) Hypothesis assessment and
inequalities for Bayes factors and relative belief ratios. Bayesian Analysis,
8, 3, 569-590.\smallskip\ 

\noindent Berger, J. O. (1990) Robust Bayesian analysis: sensitivity to the
prior. Journal of Statistical Planning and Inference, 25, 303-328.\smallskip

\noindent Berger, J. O. (1994) An overview of robust Bayesian analysis (with
discussion). Test, 3, 5--124.\smallskip

\noindent de la Horra, J. and Fernandez, C. (1994) Bayesian analysis under
$\epsilon$-contaminated priors: a trade-off between robustness and precision.
Journal of Statistical Planning and Inference, 38, 13-30.\smallskip

\noindent Dey, D. K. and Birmiwall, L. R. (1994) Robust Bayesian analysis
using divergence measures. Statistics and Probability Letters, 20,
287-294.\smallskip

\noindent Evans, M. and Moshonov, H. (2006) Checking for prior-data conflict.
Bayesian Analysis, 1, 4, 893-914.\smallskip

\noindent Evans, M., Guttman, I. and Swartz, T. (2006) Optimality and
computations for relative surprise inferences. Canadian Journal of Statistics,
34, 1, 113-129.\smallskip

\noindent Evans, M. and Jang G. H. (2011a) A limit result for the prior
predictive. Statistics and Probability Letters, 81, 1034-1038.\smallskip

\noindent Evans, M. and Jang G. H. (2011b) Weak informativity and the
information in one prior relative to another. Statistical Science, 26, 3,
423-439.\smallskip

\noindent Evans, M. and Jang G. H. (2011c) Inferences from prior-based loss
functions. arXiv:1104.3258 [math.ST].\smallskip\ 

\noindent Evans, M. and Shakhatreh, M. (2008) Optimal properties of some
Bayesian inferences. Electronic Journal of Statistics, 2, 1268-1280.\smallskip

\noindent Evans, M. and Zou, T. (2001) Robustness of relative surprise
inferences to choice of prior. Recent Advances in Statistical Methods,
Proceedings of Statistics 2001 Canada: The 4th Conference in Applied
Statistics Montreal, Canada 6 - 8 July 2001, Yogendra P. Chaubey (ed.),
90-115, Imperial College Press.\smallskip

\noindent Huber, P. J. (1973) The use of Choquet capacities in statistics.
Bulletin of the International Statistical Institute, 45, 181-191.\smallskip

\noindent Rios Insua, D. and Ruggeri, F. (2000) Robust Bayesian Analysis.
Springer-Verlag.\smallskip

\noindent Rudin, W. (1974) Real and Complex Analysis, Second Edition.
McGraw-Hill, New York.\vspace{2pt}

\noindent Ruggeri, F. and Wasserman, L. (1993) Infinitesimal sensitivity of
posterior distributions. The Canadian Journal of Statistics, 21, 2,
195-203.\smallskip

\noindent Wasserman, L. (1989) A robust Bayesian interpretation of likelihood
regions. Annals of Statistics, 17,3, 1387-1393.

\section*{Appendix}

\paragraph{Proof of Lemma \ref{huber1}}

Note first that%
\begin{equation}
Q\left(  A\,|\,x\right)  =\int_{A}\frac{m(x\,|\,\psi)}{m_{q}(x)}\,Q(d\psi).
\label{Qpost}%
\end{equation}
Therefore, using (\ref{Qpost}),%
\begin{align*}
\Pi_{\Psi}^{\epsilon}\left(  A\,|\,x\right)   &  =\frac{\left(  1-\epsilon
\right)  m(x)\Pi_{\Psi}\left(  A\,|\,x\right)  +\epsilon m_{q}(x)\,Q\left(
A\,|\,x\right)  }{\left(  1-\epsilon\right)  m(x)+\epsilon m_{q}(x)}\\
&  =\frac{\Pi_{\Psi}\left(  A\,|\,x\right)  +\frac{\epsilon}{\left(
1-\epsilon\right)  m(x)}\int_{A}m(x\,|\,\psi)\,Q(d\psi)}{1+\frac{\epsilon
}{\left(  1-\epsilon\right)  m(x)}\left\{  \int_{A}m(x\,|\,\psi)\,Q(d\psi
)+\int_{A^{c}}m(x\,|\,\psi)\,Q(d\psi)\right\}  }\\
&  \leq\frac{\Pi_{\Psi}\left(  A\,|\,x\right)  +\frac{\epsilon}{\left(
1-\epsilon\right)  m(x)}\int_{A}m(x\,|\,\psi)\,Q(d\psi)}{1+\frac{\epsilon
}{\left(  1-\epsilon\right)  m(x)}\int_{A}m(x\,|\,\psi)\,Q(d\psi)}%
\end{align*}
and the last inequality is an equality when $Q(A^{c})=0.$ Result (i) then
follows since $(p+y)/(1+y)=1-(1-p)/(1+y)$ is increasing in $y\geq0$ when
$1-p>0$ and clearly $\sup_{Q}\int_{A}m(x\,|\,\psi)\,Q(d\psi)=\lim
_{\delta\downarrow0}\int_{A}m(x\,|\,\psi)\,Q_{\delta}(d\psi)=\sup_{\psi\in
A}m(x\,|\,\psi)$ where $Q_{\delta}$ places all of its mass on the set
$\{\psi:m(x\,|\,\psi)\geq\sup_{\psi\in A}m(x\,|\,\psi)-\delta\}\cap A.$

For result (ii) we have that
\begin{align*}
\Pi_{\Psi}^{\epsilon}\left(  A\,|\,x\right)   &  \geq\frac{\Pi_{\Psi}\left(
A\,|\,x\right)  +\frac{\epsilon}{\left(  1-\epsilon\right)  m(x)}\int
_{A}m(x\,|\,\psi)\,Q(d\psi)}{1+\frac{\epsilon}{\left(  1-\epsilon\right)
m(x)}\left\{  \int_{A}m(x\,|\,\psi)\,Q(d\psi)+\sup_{\psi\in A^{c}}%
m(x\,|\,\psi)\right\}  }\\
&  \geq\frac{\Pi_{\Psi}\left(  A\,|\,x\right)  }{1+\frac{\epsilon}{\left(
1-\epsilon\right)  m(x)}\sup_{\psi\in A^{c}}m(x\,|\,\psi)}%
\end{align*}
where the first inequality is obvious and the second follows since
$(p+y)/(1+y+b)=1-(1-p+b)/(1+y+b)$ is increasing in $y\geq0$ when $1-p+b>0$ and
so the minimum is attained at $y=0.$ The inequalities are equalities whenever
$Q(A)=0.$ We then argue as in (i).

For (iii) a direct calculation yields
\begin{align*}
\delta(A)  &  =\Pi_{\Psi}^{upper}\left(  A\,|\,x\right)  -\Pi_{\Psi}%
^{lower}\left(  A\,|\,x\right) \\
&  =\frac{\Pi_{\Psi}\left(  A\,|\,x\right)  (1+\epsilon^{\ast}r(A^{c}%
))+\epsilon^{\ast}r(A)(1+\epsilon^{\ast}r(A^{c}))-\Pi_{\Psi}\left(
A\,|\,x\right)  (1+\epsilon^{\ast}r(A))}{(1+\epsilon^{\ast}r(A))(1+\epsilon
^{\ast}r(A^{c}))}\\
&  =\frac{\Pi_{\Psi}\left(  A\,|\,x\right)  \epsilon^{\ast}(r(A^{c}%
)-r(A))}{(1+\epsilon^{\ast}r(A))(1+\epsilon^{\ast}r(A^{c}))}+\frac
{\epsilon^{\ast}r(A)}{1+\epsilon^{\ast}r(A)}.
\end{align*}

Result (iv) follows from $\delta(A^{c})=\sup_{Q}(1-\Pi_{\Psi}^{\epsilon
}\left(  A\,|\,x\right)  )-\inf_{Q}(1-\Pi_{\Psi}^{\epsilon}\left(
A\,|\,x\right)  )=\sup_{Q}(-\Pi_{\Psi}^{\epsilon}\left(  A\,|\,x\right)
)-\inf_{Q}(-\Pi_{\Psi}^{\epsilon}\left(  A\,|\,x\right)  )=\delta(A).$

\end{document}